\patchcmd{\section}{\normalfont\scshape\centering}{\normalfont\bfseries}{}{}
\patchcmd{\subsection}{-.5em}{.5em}{}{}
\renewenvironment{proof}{{\noindent\bfseries Proof.}}{}
\newtheorem{theo}{{Theorem}}[section]
\newtheorem{coro}[theo]{{Corollary}}
\newtheorem{lemma}[theo]{{Lemma}}
\newtheorem{prop}[theo]{Proposition}
\theoremstyle{definition}
\newtheorem{remark}[theo]{\textbf{Remark}}
\newtheorem{example}[theo]{Example}
\numberwithin{equation}{section}
\newcommand{\ra}{\rightarrow}
\newcommand{\ol}{\overline}
\newcommand{\rI}{\mathrm{I}}
\newcommand{\rR}{\mathrm{R}}
\newcommand{\cI}{\mathcal{I}}
\newcommand{\cE}{\mathcal{E}}
\newcommand{\cP}{\mathcal{P}}
\newcommand{\Br}{\mathrm{Br}}
\newcommand{\Gal}{\mathrm{Gal}}
\newcommand{\inv}{\mathrm{inv}}
\newcommand{\ind}{\mathrm{Ind}}
\newcommand{\gm}{\mathbb{G}}
\newcommand{\rat}{\mathbb{Q}}
\newcommand{\ent}{\mathbb{Z}}
  \newcommand{\textcyr}[1]{%
    {\fontencoding{OT2}\fontfamily{wncyr}\fontseries{m}\fontshape{n}%
     \selectfont #1}}
\newcommand{\sha}{{\mbox{\textcyr{Sh}}}}
\begin{document}
\tolerance 400 \pretolerance 200 \selectlanguage{english}

\title{Hasse principles for multinorm equations}
\author{E. Bayer-Fluckiger}
\author{T.-Y. Lee}
\author{R. Parimala}
\date{\today}
\maketitle

\begin{abstract}
A classical result of Hasse states that  the norm principle holds for finite cyclic extensions of global fields, in other words local norms are
global norms. We investigate the norm principle for finite dimensional commutative \'etale algebras over global fields; since such an algebra is a product of
separable extensions, this is often called the multinorm principle. Under the assumption that the \'etale
algebra contains a cyclic factor, we give a necessary and sufficient condition for  the Hasse principle to hold, in terms of an explicitly constructed element of  a finite abelian group.
This can be seen as an explicit description of the Brauer-Manin obstruction to the Hasse principle.

\medskip

\noindent {\em Keywords:} multinorm, tori, Shafarevich groups, local-global principles, Brauer-Manin obstructions

\medskip

\noindent {\em MSC 2000:} 11G35, 12G05,14G05, 14G25.
\end{abstract}

\small{} \normalsize

\medskip

\selectlanguage{english}
\section{Introduction}

\bigskip

Let $k$ be a global field,  and $L$ be a finite dimensional commutative \'etale algebra over  $k$.
We say that the {\it Hasse norm principle} holds for $L$ if the local-global principle holds for the equation
\begin{equation}\label{e0}
N_{L/k}(t)=c
\end{equation}
for all $c\in k^\times$; this terminology is inspired by  Hasse's result that the norm principle holds in the case of {\it cyclic extensions}
(\cite{Ha1}, \cite{Ha2} \S I (3.11) and \S II (15)).
Over the years, the norm principle for separable field extensions attracted a lot of attention; it is known not to hold in general, and many positive results are also available, see for instance \cite{PlR}, pages 308-309 for a survey; for more recent results,  see  \cite{BN}, \cite{FLN}, and
the references therein.

\medskip

It is natural to ask for Hasse principles in the case when $L$ is a finite dimensional commutative {\it \'etale algebra}, and not just a field extension.
Since $L$ is by definition a product of separable extensions, the equation (\ref{e0}) is often called a multinorm equation.

\medskip
This more general problem was also studied extensively, in particular by H\"urlimann (\cite {H}), Colliot-Th\'el\`ene and Sansuc (unpublished), Platonov and Rapinchuk (see \cite {PlR}, sections 6.3  and 9.3), Prasad and Rapinchuk (\cite {PR1}, Section 4),
Pollio and Rapinchuk (\cite{PoR}),  Demarche and Wei (\cite{DW}), Pollio  (\cite {Po}). Multinorm equations also arise when dealing with classical groups
of type $A_n$ (see for instance \cite{PR1} Prop. 4.2).

\medskip

In spite of many interesting results, some quite simple cases were still open. We illustrate this, as well as our results, by the following example:

\medskip
\noindent
{\bf Example.} Assume that $L$ is a product of $n$ non-isomorphic quadratic field extensions of $k$. If $n = 1$ or $n = 2$, then the Hasse principle holds for $L$ - this is clear for $n = 1$,
and easy for $n = 2$ (for instance, it is a consequence of \cite {H}, Proposition 3.3). It is also well-known that it does not hold in general when $n = 3$ (see
for instance [CT 14]). In the present paper, we show that  {\it the Hasse principle holds if $n \ge 4$.}

\medskip

To obtain this result and others, let us assume that one of the factors of $L$ is  a {\it cyclic field extension} of $k$.  Under this hypothesis, we
construct a finite abelian group $\sha(L)$ having the property that

\medskip
\centerline {$\sha(L) = 0 \iff$ the Hasse principle holds for $L$}

\medskip
\noindent
(cf.
Section $5$). Assume now that $\sha(L) \not = 0$, and that $c \in k^{\times}$ is such that (\ref{e0}) has a solution locally everywhere. Then we
construct a homomorphism $$\alpha_c : \sha(L) \to \rat / \ent$$ such that

\medskip
\centerline {(\ref{e0})  has a solution over $k \iff \alpha_c =0$}

\medskip
\noindent (see Sections 6 and 7, in particular Theorem \ref{main_theo}).

\medskip These results can be summarized as follows : let $I_L$ be the id\`ele group of $L$. Then sending $c \in k^{\times}$ to $\alpha_c$ gives rise to an isomorphism $$k^{\times} \cap N_{L/k}(I_L) / N_{L/k}(L^{\times}) \to \sha(L)^*$$ (where $\sha(L)^*$ is the dual of $\sha(L)$, cf. Corollary \ref{idele}).

\medskip We also give a necessary and sufficient condition for the Hasse principle to hold when one of the factors is metacyclic (see Proposition \ref{metacyclic}).

\medskip The results are easy to use.  To illustrate this, we consider the case where
$L$ is a {\it product of cyclic extensions}; assume that $L = \underset{i \in J} \prod K_i $, where $K_i/k$ is a cyclic extension of degree $d_i$.
Let $\cP$ be the set of prime numbers dividing $\underset{i \in J} \prod d_i$. For all $p \in \cP $ and all $i \in J$, let $K_i(p)$ be the largest subfield
of $K_i$ such that $[K_i(p):k]$ is a power of $p$, and set $L(p) = \underset{i \in J} \prod K_i(p)$.
Then we have
$$\sha (L) =  \underset{p \in \cP}{\oplus}  \sha(L(p)),$$
(see Proposition \ref{prime decomposition}).

\medskip
For any cyclic field extension $K/k$ of prime power degree, we denote by $K_{\rm prim}$  the unique subfield of $K$ of degree $p$ over $k$.  Set $$L(p)_{\rm prim} = \underset{i \in J} \prod K_i(p)_{\rm prim}.$$
Then we have

$$\sha(L) = 0 \iff  \underset{p \in \cP(L)}{\oplus}   \sha(L(p)_{\rm prim}) = 0,$$
(cf Theorem \ref{zero}), and $$\sha(L(p)_{prim}) \simeq (\ent / p \ent)^{m_p(L)},$$
where $\cP(L)$ is a set of prime numbers (subset of $\cP$), and $m_p(L)$ is a positive integer; both are determined explicitly (see Theorem \ref {cyclicproduct}).

\medskip
The paper is structured as follows. Sections 1-4 contain some preliminary results, including a new proof of a proposition of H\"urlimann, \cite{H} Prop. 3.3. The group
$\sha(L)$ is defined in Section 5, and the homomorphism $\alpha_c$ in Section 6. In both sections, we start with the case where the \'etale algebra $L$ has
a cyclic factor of prime power degree, which is the essential case. We also show how one can reduce the exponent of the prime number, using the exact
sequence of Proposition \ref{exact} - this is then used in inductive arguments. The main result is proved in Section 7 (see Theorem \ref{main_theo}).
Section 8 contains the application of the above results to the special case where all the factors of the \'etale algebra are cyclic.

\medskip
Note that the results of this paper are related to the Brauer-Manin obstruction. Indeed,
for $c = 1$, the equation $(\ref{e0})$ yields the so-called {\it norm-one-torus} defined by $L/k$  (see 1.2 for details); we denote this torus by $T_{L/k}$.
When $k$ is an algebraic number field, then one can  deduce from \cite{San} that the only obstruction to the Hasse principle is the Brauer-Manin obstruction, and is
an element of the group $\sha^2(k,\hat{T}_{L/k})^*$. We show that $\sha(L) \simeq \sha^2(k,\hat{T}_{L/k})$ (see Proposition \ref {sha_isom}), hence our results
provide an explicit description of the Brauer-Manin obstruction.

\medskip
We thank the referee for several very useful comments. The second named author wishes to acknowledge support from the Alexander von Humboldt
Foundation. The third named author is partially supported by the NSF grants DMS 1801951 and FRG 1463882.

\section{ Notation, definitions and basic facts}

\medskip
\subsection {Weil restriction}
If $f : R \to R'$ is a homomorphism of commutative rings such that $R'$ is a projective $R$-module of finite type, and if $W$ is an affine $R'$-scheme, then we denote by
$\rR_{R'/R} W$ the Weil restriction (see for instance [O 84], Appendice 2).

\subsection{Etale algebras, tori and characters}

Let $k$ be a field, let $k_s$ be a separable closure of $k$ and set $\Gamma_k = {\rm Gal}(k_s/k)$. We fix once and for all this
separable closure $k_s$, and all separable extensions of $k$ that will appear in the paper will be contained in $k_s$.
We use standard notation in Galois cohomology; in particular, if $M$ is
a discrete $\Gamma_k$-module and $i$ is an integer  $\ge 0$, we set $H^i(k,M) = H^i(\Gamma_k,M)$.

\medskip
If $L$ is a commutative \'etale $k$-algebra of finite rank, we denote by $N_{L/k}$  the norm map, and set $T_{L/k} = \rR^{(1)}_{L/k}(\gm_m)$; then
$T_{L/k}$ is the $k$-torus determined by the exact sequence
 \begin{equation}\label{e2}
\xymatrix@C=0.5cm{
  1 \ar[r] &   T_{L/k}   \ar[r] & \rR_{L/k} (\gm_m)   \xrightarrow{N_{L/k}} \gm_m \ar[r] & 1 }.
\end{equation}

\medskip

For a $k$-torus $ T$, we denote by $\hat{ T} = {\rm Hom}(T,\gm_m)$ its character group. If $K/k$ is a finite separable extension,
set $\Gamma_K=\Gal(k_s/K)$. If moreover
$M$ is a discrete $\Gamma_K$-module, set  $\rI_{K/k}(M)=\ind_{\Gamma_K}^{\Gamma_k}(M)$.

\medskip
The following lemmas will be used several times in the sequel

\medskip
\noindent
\begin{lemma}\label{product}
Let $F/k$ be a separable extension of finite degree, and let $L$ be the product of $n$ copies of $F$. Then we have

\smallskip
{\rm (i)} \ \ \ $T_{L/k} \simeq \rR_{F/k}(\gm_m)^{n-1} \times T_{F/k}$.

\smallskip
{\rm (ii)} \ \ \
$H^1(k,\hat{T}_{L/k})\simeq H^1(k,\hat{T}_{F/k}).$
\end{lemma}

\medskip
\noindent
{\bf Proof.} The isomorphism $(\rR_{F/k}(\gm_m))^{n}\to (\rR_{F/k}(\gm_m))^{n}$
sending
$(b_1,...,b_{n})$ to $(b_1,...,b_{n-1},b_1... b_{n-1}b_{n})$ induces
an isomorphism  $T_{L/k} \simeq \rR_{F/k}(\gm_m)^{n-1} \times T_{F/k}$. This proves {\rm (i)}. By  {\rm (i)}, we have $\hat T_{L/k} \simeq \rI_{F/k}( \ent)^{n-1} \oplus \hat T_{F/k}$;
since $H^1(k, \rI_{F/k}( \ent))= 0$,
this implies \rm (ii).

\medskip

\medskip
\noindent
\begin{lemma}\label{cyclic}
Let $K/k$ be a cyclic extension of degree $d$. Then there exists an isomorphism  $$H^1(k,\hat{T}_{K/k})\to  \ent/d {\ent}$$ which is
functorial with respect to base change.
\end{lemma}

\begin{proof} Let $\sigma$ be a generator of ${\rm Gal}(K/k)$.
Consider the exact sequence

$$1 \ra \gm_m \ra \rR_{K/k}(\gm_m) \ra  T_{K/k}  \ra 1,$$

\smallskip \noindent
where the map from  $\rR_{K/k}(\gm_m)$ to $ T_{K/k}$ sends $x$ to $x/\sigma(x)$, and its dual sequence

$$0 \ra \hat{T}_{K/k} \ra \rI_{K/k}(\ent) \ra \ent \ra 0.$$
This exact sequence induces
$$\xymatrix{
  \rI_{K/k}(\ent)^{\Gamma_k}  \xrightarrow{\mathrm{\epsilon}}  \ent \ar[r] & H^1(k,\hat T_{K/k}) \ar[r] & H^1(k,\rI_{K/k}(\ent))=0
  }.$$

We have  $\rI_{K/k}(\ent)^{\Gamma_k}\simeq\ent$, generated by the sum of the elements of ${\rm Gal}(K/k)$, and
the map $\mathrm{\epsilon}$ is  multiplication by $d$; hence we obtain an isomorphism $H^1(k,\hat{T}_{K/k})\to \ent/d\ent$
which is independent of the choice of the generator $\sigma$.

\end{proof}

\medskip

\subsection {The multinorm problem}

\smallskip Let $L$ be an \'etale $k$-algebra, and let $c \in k^*$. Let $X_c$ be the affine $k$-variety determined by  the equation
$N_{L/k}(t) = c$. Then $X_c$ is a torsor under the torus $T_{L/k}$ defined in 1.2, hence defines a class $[X_c] \in H^1(k,T_{L/k})$;
the variety $X_c$ has a $k$-point if and only if $[X_c] = 0$.  Hence we have

$$c \in N_{L/k}(L^{\times}) \iff X_c(k) \not = \emptyset \iff [X_c]= 0.$$

\section{ A construction}

\medskip Let $k$ be a field, and let $L$ be a commutative \'etale $k$-algebra of finite rank; assume that $L$ {\it is not a field}. We keep the notation of the previous
section. The aim of this section is to introduce a $k$-torus that will play a basic role in the study of the
cohomology of the torus $T_{L/k}$, and of the multinorm problem.

\medskip
Let us write $L = K \times K'$, where $K$ and $K'$ are \'etale $k$-algebras, and set $E = K \otimes_k K'$.

\medskip The norm maps $N_{K/k} : K \to k$ and $N_{K'/k} : K' \to k$ induce
$N_{E/K'} : E \to K$ and $N_{E/K} : E \to K'$.
Let
$f:\rR_{E/k}(\gm_m)\ra \rR_{L/k}(\gm_m)$ be defined by $f (x) = (N_{E/K}(x)^{-1},N_{E/K'}(x))$.
It is clear that
the image of $f$ is contained in $T_{L/k}$. Moreover, $f$ is surjective as a map of algebraic groups (easily checked after base change
to the separable closure $k_s$ of $k$).

\medskip
Consider the torus $S_{K,K'}$ defined by the exact sequence

$$\xymatrix@C=0.5cm{
  1 \ar[r] & S_{K,K'} \ar[r] & \rR_{E/k}(\gm_m) \xrightarrow{f} T_{L/k} \ar[r] & 1 }.$$

  Note that $S_{K,K'}$ also fits in the exact sequence

  \begin{equation}\label{e3}
\xymatrix@C=0.5cm{
  1 \ar[r] &  S_{K,K'} \ar[r] & \rR_{K'/k}(T_{E/K'}) \xrightarrow{N_{E/K}} T_{K/k}  \ar[r] & 1 }.
\end{equation}
where $T_{E/K'}$ is defined by the exact sequence of $K'$-tori

$$1 \to T_{E/K'} \to R_{E/K'}(\gm_m) {\buildrel {N_{E/K'}} \over  \longrightarrow} \gm_m \to 1.$$

  \section{Tate-Shafarevich groups}

\medskip We keep the notation of the previous sections, and assume that
$k$  is a global field. Let
$\Omega_k$ be the set of all places of $k$; if $v \in \Omega_k$, we denote by $k_v$ the completion of $k$ at $v$.

\medskip
For any $k$-torus $T$, set $\sha ^i(k,T) = {\rm Ker} (H^i(k,T) \ra \underset {v \in \Omega_k} \prod H^i(k_v,T))$. If $M$ is a $\Gamma_k$-module,
set
$\sha^i (k,M) = {\rm Ker} (H^i(k,M) \ra \underset {v \in \Omega_k} \prod H^i(k_v,M))$. Recall that by  Poitou-Tate duality, we have
$\sha^2(k,\hat T) \simeq \sha^1(k,T)^*$.

\subsection{Hasse principle for the multinorm problem}

\medskip Let $L$ be an \'etale $k$-algebra, and let $c \in k^{\times}$. If $X_c(k_v) \not = \emptyset $ for all $v \in \Omega_k$, then we have $[X_c] \in \sha^1(k,T_{L/k})$.
In particular, the Hasse principle holds for all $c \in k^{\times}$ if and only if $\sha^1(k,T_{L/k}) =0$.

\medskip We have the following relationship between the Tate-Shafarevich groups of the torus $T_{L/k}$, and  the torus $S_{K,K'}$ defined in \S 2 :

\medskip
\noindent
\begin{lemma}\label{T and S}
We have
$\sha^{1}(k,T_{L/k})\simeq\sha^{2}(k,S_{K,K'})$.
\end{lemma}

\medskip
\noindent
{\bf Proof.}
By the definition of the torus $S_{K,K'}$, we have the  exact sequence

$$\xymatrix@C=0.5cm{
  1 \ar[r] & S_{K,K'} \ar[r] & \rR_{E/k}(\gm_m) \xrightarrow{f} T_{L/k} \ar[r] & 1 },$$

\noindent
giving rise to the cohomology exact sequence

$$0\ra H^{1}(k, T_{L/k})\ra H^2(k, S_{K,K'})\ra H^2(k,\rR_{E/k}(\gm_m)).$$

\smallskip
By the corresponding cohomology exact sequence over $k_v$ for all $v \in \Omega_k$ and the Brauer-Hasse-Noether Theorem, we have $\sha^2(k,\rR_{E/k}(\gm_m))=0$, hence
$\sha^{1}(k, T_{L/k})\simeq\sha^{2}(k, S_{K,K'})$, as claimed.

 \medskip

We now compute the group $\sha^2(k, \hat{T}_{K/k})$ for a cyclic extension $K/k$ - note that by Poitou-Tate duality, this is equivalent to
Hasse's cyclic norm principle, which is the following proposition :

\begin{prop}\label{hassecyclicnorm}
Let $K/k$ be a cyclic extension. Then  $\sha^1(k, {T}_{K/k})=0$.
\end{prop}

\begin{proof} We give a proof for the convenience of the reader.
Let $\sigma$ be a generator of ${\rm Gal}(K/k)$.
Consider the exact sequence
$$1 \ra \gm_m \ra \rR_{K/k}(\gm_m) \ra  T_{K/k}  \ra 1,$$

\smallskip \noindent
where the map from  $\rR_{K/k}(\gm_m)$ to $ T_{K/k}$ sends $x$ to $x/\sigma(x)$. This sequence gives rise to an injection $H^1(k,T_{K/k}) \to H^2(k,\gm_m)$.
By the Brauer-Hasse-Noether theorem, we have $\sha^2(k,\gm_m) = 0$, hence $\sha^1(k, {T}_{K/k})=0$.

\end{proof}

\begin{coro}\label{cyclic_sha}
Let $K/k$ be a cyclic extension. Then  $\sha^2(k, \hat{T}_{K/k})=0$.
\end{coro}

This follows from the previous proposition, combined with Poitou-Tate duality.

\section {A result of H\"urlimann}

Using the above lemmas, we  generalize a result of H\"urlimann (\cite{H} Prop. 3.3).

\medskip
\noindent
\begin{prop}\label{hurlimann}
 Let $K/k$ be a cyclic extension of $k$, and let $K'/k$ be a separable extension of finite degree. Let $c \in k^\times$.
 Then the local-global principle holds for the multinorm equation $N_{K/k}(x)N_{K'/k}(y)= c$.
\end{prop}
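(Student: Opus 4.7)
The Brauer-Manin obstruction being the only obstruction to the Hasse principle for $X_c$ (as already cited in the introduction), it suffices to show $\sha^1(k,T_0)=0$ when $m=1$. By Lemma \ref{T0 and T1} combined with Poitou-Tate duality for tori (which gives $\sha^2(k,T_1)\cong\sha^1(k,\hat{T}_1)^\ast$), this reduces to establishing $\sha^1(k,\hat{T}_1)=0$.

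For $m=1$, the defining exact sequence of $T_1$ dualizes to
\[0\to\hat{\rS}_{L_0/k}\to\rI_{L_1/k}(\hat{\rS}_{E_1/L_1})\to\hat{T}_1\to 0.\]
The plan is to apply the long exact sequence in Galois cohomology and read off $\sha^1(k,\hat{T}_1)=0$. Lemma \ref{cyclic_sha} yields $\sha^2(k,\hat{\rS}_{L_0/k})=0$. For the other term, Shapiro's lemma --- which respects localization --- identifies $\sha^1(k,\rI_{L_1/k}(\hat{\rS}_{E_1/L_1}))$ with $\sha^1(L_1,\hat{\rS}_{E_1/L_1})$. Since $L_0/k$ is cyclic Galois, $E_1=L_0\otimes_kL_1$ decomposes as an $L_1$-algebra into $[L_0\cap L_1:k]$ copies of the cyclic extension $L'=L_0L_1$ of $L_1$; a computation using $0\to\ent\to\rI_{E_1/L_1}(\ent)\to\hat{\rS}_{E_1/L_1}\to 0$, Shapiro, and Hilbert 90 identifies $\rH^1(L_1,\hat{\rS}_{E_1/L_1})$ with the character group of $\Gal(L'/L_1)$, whereupon Chebotarev density in this cyclic extension forces $\sha^1(L_1,\hat{\rS}_{E_1/L_1})=0$.

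The main subtlety will be the final diagram chase: restricting a long exact sequence to $\sha$-groups is not automatically exact. Given $\alpha\in\sha^1(k,\hat{T}_1)$, the vanishing $\sha^2(k,\hat{\rS}_{L_0/k})=0$ permits lifting $\alpha$ to some $\beta\in\rH^1(L_1,\hat{\rS}_{E_1/L_1})$, and the local triviality of $\alpha$ then shows $\beta$ is locally in the image from $\rH^1(k_v,\hat{\rS}_{L_0/k})$. Producing a single global $\gamma\in\rH^1(k,\hat{\rS}_{L_0/k})$ whose localizations match these local lifts (so that $\beta-\iota(\gamma)\in\sha^1(L_1,\hat{\rS}_{E_1/L_1})=0$, forcing $\alpha=0$) is the crux; this local-to-global adjustment follows from the Poitou-Tate exact sequence applied to the norm-one torus $\rR^{(1)}_{L_0/k}(\gm_m)$, combined with Hasse's theorem for cyclic extensions.
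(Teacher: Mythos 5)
Your opening reduction is exactly the paper's: reduce to $\sha^1(k,\hat T_1)=0$ via Lemma~\ref{T0 and T1}, Poitou--Tate duality, and the fact that Brauer--Manin is the only obstruction; then pass to the dual sequence $0\to\hat{\rS}_{L_0/k}\to\rI_{L_1/k}(\hat{\rS}_{E_1/L_1})\to\hat T_1\to 0$ and invoke $\sha^2(k,\hat{\rS}_{L_0/k})=0$. The divergence, and the gap, is at the ``crux'' you flag yourself.

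The paper finishes by a direct computation: identifying $\rH^1(k,\hat{\rS}_{L_0/k})\cong\ent/q\ent$ and $\rH^1(k,\rI_{L_1/k}(\hat{\rS}_{E_1/L_1}))\cong\ent/s\ent$ with $s\mid q$, and observing that $\iota^1$ is the natural projection $\ent/q\ent\twoheadrightarrow\ent/s\ent$. Surjectivity of $\iota^1$ forces $\rho^1=0$, and since $\sha^1(k,\hat T_1)\subseteq\img(\rho^1)$, we are done with no diagram chase at all. You instead replace surjectivity of $\iota^1$ by the weaker fact $\sha^1(L_1,\hat{\rS}_{E_1/L_1})=0$ and try to bridge the gap with a local-to-global adjustment which you attribute to Poitou--Tate for $\rR^{(1)}_{L_0/k}(\gm_m)$ plus Hasse. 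This does not hold up. What you need at that point is exactly $\beta\in\img(\iota^1)$: after lifting $\alpha$ to $\beta$ and noting $\beta_v\in\img(\iota^1_v)$ for all $v$, the injectivity of localization for the middle term (which is what $\sha^1(L_1,\hat{\rS}_{E_1/L_1})=0$ gives) shows that finding a global $\gamma$ with $\iota^1(\gamma)_v=\beta_v$ is \emph{equivalent} to $\beta\in\img(\iota^1)$. Hasse's theorem only gives $\sha^1(k,\rR^{(1)}_{L_0/k}(\gm_m))=0$, hence $\sha^2(k,\hat{\rS}_{L_0/k})=0$, which you have already used; neither it nor the Poitou--Tate sequence for $\hat{\rS}_{L_0/k}$ produces the required surjectivity onto $\img(\iota^1)$ without a computation of $\iota^1$ itself.

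A sanity check that the crux cannot be gotten for free: for $m\ge 2$ one still has $\sha^2(k,\hat{\rS}_{L_0/k})=0$ and $\sha^1(k,\bigoplus_i\rI_{L_i/k}(\hat{\rS}_{E_i/L_i}))=0$ (by the same Shapiro/Chebotarev argument applied to each summand), yet $\sha^1(k,\hat T_1)$ is frequently nonzero --- that is the whole content of Sections~3 and~4 of the paper. If your local-to-global adjustment followed formally from Poitou--Tate and Hasse, it would prove $\sha^1(k,\hat T_1)=0$ for all $m$, which is false. What actually changes between $m=1$ and $m\ge 2$ is precisely the surjectivity of $\iota^1$: for $m=1$ it is the projection $\ent/q\ent\to\ent/s\ent$, while for $m\ge 2$ it is a diagonal map with small image. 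You already have all the ingredients to compute $\iota^1$ (your identification of $\rH^1(L_1,\hat{\rS}_{E_1/L_1})$ with $\ent/s\ent$, plus the paper's $\rH^1(k,\hat{\rS}_{L_0/k})\cong\ent/q\ent$); just check $\iota^1$ is the canonical surjection and the proof closes immediately, with no need for any local-to-global step.
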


\medskip
\noindent
{\bf Proof.} Set $L = K \times K'$; the assertion is equivalent to the vanishing of $\sha^1(k,T_{L/k})$.
By Lemma \ref{T and S}, we have  $\sha^1(k,T_{L/k})\simeq\sha^2(k,S_{K,K'}).$ By Poitou-Tate duality we have $\sha^2(k,S_{K,K'})\simeq\sha^1(k,\hat{S}_{K,K'})^\ast,$ hence
it suffices to prove that $\sha^1(k,\hat{ S}_{K,K'})=0$.
Since $K/k$ is a cyclic extension, the algebra $E =K\otimes_k K'$ is isomorphic to a product of copies of $F$, where $F/K'$ is some cyclic field extension. Set $d=[K:k]$ and $f =[F:K']$.

\medskip

Consider the dual sequence  of (\ref{e3}) :
\begin{equation}\label{e6}
\xymatrix{
  0 \ar[r] & \hat{T}_{K/k} \xrightarrow{\iota}  \rI_{K'/k}(\hat{T}_{E/K'}) \xrightarrow{\rho} \hat S_{K,K'} \ar[r] & 0
  },
\end{equation}
and the sequence induced by (\ref{e6})
\begin{equation}\label{e7}
\xymatrix{
   H^1(k,\hat{T}_{K/k}) \xrightarrow{\iota^1} H^1(k,\rI_{K'/k}(\hat{T}_{E/K'})) \xrightarrow{\rho^1} H^1(k,\hat S_{K,K'})  \xrightarrow{\delta}
   H^2(k, \hat{T}_{K/k}).
  }
\end{equation}

We have $H^1(k,\rI_{K'/k}(\hat{T}_{E/K'})) \simeq H^1(K', \hat{T}_{E/K'})$; lemmas  \ref{product}  {\rm (ii)} and  \ref{cyclic} imply that $H^1(K', \hat{T}_{E/K'}) \simeq H^1(K', \hat{T}_{F/K'})\simeq\ent/f\ent.$
The map $\iota^1$ is the natural projection from $\ent/d\ent\to\ent/f\ent$, therefore $\iota^1$ is surjective. This implies that $\delta : H^1(k,\hat S_{K,K'}) \to H^2(k, \hat{T}_{K/k})$
is injective; moreover, $\delta$ induces an injection $\sha^1(k,\hat S_{K,K'}) \to \sha^2(k, \hat{T}_{K/k})$.
Since $K/k$ is a cyclic extension,  we have $\sha^2(k, \hat{T}_{K/k})=0$ by Corollary \ref{cyclic_sha}.
The proposition then follows.

\section{ The group $\sha(K,K')$}

\medskip

We keep the notation of the previous sections : in particular, $L = K \times K'$, where $K$ and $K'$ are \'etale $k$-algebras, and $E = K \otimes K'$.
In addition, we now assume that $K/k$ is a {\it cyclic extension}. Under this hypothesis, we define a finite abelian group $\sha(K,K')$
using the local splitting patterns of $E$, and we show that
$\sha^1(k,T_{L/k})$ is isomorphic to the dual of $\sha(K,K')$.

\medskip Let $K' = \prod_{i \in \cI} K_i$, where the $K_i/k$ are field extensions. Then we have $E = \prod_{i \in \cI} E_i$, with $E_i = K \otimes K_i$.

\medskip
If $v \in \Omega_k$ and if $A$ is a commutative $k$-algebra, set $A^v = A \otimes_k k_v$.

\medskip

\subsection{The  prime power degree case}

Suppose that $K$ is a cyclic extension of degree $p^e$, where $p$ is a prime number.
We start with some notation and definitions.
For each $i \in \cI$, let $M_i$ be a cyclic extension of $K_i$ such that $E_i$ is isomorphic to a product of copies of $M_i$.
Let $p^{e_i}=[M_i:K_i]$; without loss of generality, we assume that $e_i\geq e_{i+1}$ for $1\leq i\leq m-1$.

\medskip

Let $s$ and $t$ be  positive integers. For $s\geq t$, let $\pi_{s,t}$ be the canonical projection  $\ent/p^{s}\ent \to
\ent/p^{t}\ent$.
For $x\in\ent/p^{s}\ent$ and $y\in\ent/p^{t}\ent$, we say that \emph{$x$ dominates $y$} if $s\geq t$ and $\pi_{s,t}(x)=y$; if this is the case, we write
 $x\succeq y$.
For $x\in\ent/p^{s}\ent$ and $y\in\ent/p^{t}\ent$, let  $\delta(x,y)$  be the greatest nonnegative integer $d\leq \min\{s,t\}$
such that $\pi_{s,d}(x)=\pi_{t,d}(y)$.
We have  $\delta(x,y)=\min\{s,t\}$ if and only if $x\succeq y$ or $y\succeq x$.

\medskip

Let $\cI=\{1,...,m\}$. For $a=(a_1,...,a_m)\in\underset{i \in \cI}{\oplus}\ent/p^{e_i}\ent$ and $n\in \ent/p^{e_1}\ent$,
let $I_n(a)$ be the set
$\{i\in\cI|\ n\succeq a_i\}$ and let $I(a)=(I_0(a),...,I_{p^{e_1}-1}(a))$.
\medskip

Let $\cE$ be the set of $p^{e_1}$-tuples $(I_0,...,I_{p^{e_1}-1})$, where $I_0,...,I_{p^{e_1}-1}$ are subsets of $\cI$ such that
$\underset{0\leq n\leq p^{e_1}-1}{\bigcup} I_n=\cI.$
Now we  characterize the image of  the map \[I:\underset{i \in \cI}{\oplus}\ent/p^{e_i}\ent\to \cE.\]
An element $(I_0,...,I_{p^{e_{1}}-1})\in\cE$ is said to be \emph{coherent} if
for all $n_1$, $n_2\in\ent/p^{e_1}\ent$ we have:
\begin{enumerate}
  \item  If $i\in I_{n_1}\cap I_{n_2}$, then $\pi_{e_1,e_i}(n_1)=\pi_{e_1,e_i}(n_2)$.
  \item  If $i\in I_{n_1}$ and $\pi_{e_1,e_i}(n_1)=\pi_{e_1,e_i}(n_2)$, then $i\in I_{n_2}$.
\end{enumerate}

\medskip

Let $\cE^c$ be the subset of all coherent elements in $\cE$.
For $a\in\underset{i \in \cI}{\oplus}\ent/p^{e_i}\ent$, it is clear that $I(a)$ is a coherent element.
Conversely for a coherent element $(I_0,...,I_{p^{e_1}-1})\in\cE^c$, we set $a_i=\pi_{e_1,e_i}(n)$ for $i\in I_n$.
Note that  condition (1) of the definition of a coherent element ensures that  the $a_i$'s are well-defined.
Hence $a=(a_1,...,a_m)$ is a well-defined element in $\underset{i \in \cI}{\oplus}\ent/p^{e_i}\ent$;
condition $(2)$ implies that  $I(a)=(I_0,...,I_{p^{e_1}-1})$.
This shows that $I$ is a bijection between $\underset{i \in \cI}{\oplus}\ent/p^{e_i}\ent$ and $\cE^c$.

\medskip If $w$ is a place of $K_i$, we denote by $K_i^w$ the completion of $K_i$ at $w$.
Given a positive integer $0\leq d\leq e$ and $i \in \cI$, let $\Sigma_i^d$ be the set of all places $v\in\Omega_k$ such that at each place $w$ of $K_i$ above $v$, the algebra
$K \otimes K_i^w$ is isomorphic to a product of isomorphic field extensions of degree at most $p^d$ of $K^w_i$.  Let $\Sigma_i=\Sigma^0_i$, in other words,
$\Sigma_i$ is the set of all places $v \in \Omega_k$ where $E_i^v$ is isomorphic to a product of copies of $K_i^v$.

\medskip
Let $(I_0,...,I_{p^{e_1}-1})\in\cE^c$. For $n_1\in \ent/p^{e_1}\ent$ and $i\in \cI$, set $\delta(n_1,i)=\delta(n_1,\pi_{e_1,e_i}(n_2))$, where $n_2$ is an element in $\ent/p^{e_1}\ent$ such that $i\in I_{n_2}$. Since $(I_0,...,I_{p^{e_1}-1})$ is coherent, $\delta(n_1,i)$ is independent of the choice of $n_2$ and hence is  well-defined.
Note that if we let $a=(a_1,...,a_m)$ be the element in $\underset{i \in \cI}{\oplus}\ent/p^{e_i}\ent$ corresponding to  $(I_0,...,I_{p^{e_1}-1})$,
then $\delta(n_1,i) = \delta(n_1,a_i)$.
For $I_n\subsetneqq \cI$, define
\begin{equation}\label{e24}
\Omega(I_n)=\underset{i\notin I_n}{\cap}{\Sigma}_i^{\delta(n,i)}.
\end{equation}
For $I_n=\cI$, we set $\Omega(I_n)=\Omega_k$.

\medskip
Set \[G = G_k(K,K') =\{(a_1,...,a_m)\in\underset{i \in \cI}{\oplus}\ent/p^{e_i}\ent |\ \underset{n\in\ent/p^{e_1}\ent}{\bigcup} \Omega(I_n(a))=\Omega_k\}.
\]

\medskip

\noindent
\begin{lemma}\label{group}
{\it The set $G$ is a subgroup of $\underset{i \in \cI}{\oplus}\ent/p^{e_i}\ent$.}
\end{lemma}

\medskip
\noindent
{\bf Proof.}
Let $a=(a_1,...,a_m)$ and $b=(b_1,...,b_m)$ be elements of $G$.
By the definition of $G$, for each  $v\in\Omega_k$, there exist some  $n$, $n'\in\ent/p^{e_1}\ent$ such that $v\in \Omega(I_n(a))$ and $v\in\Omega(I_{n'}(b))$.
We claim that $$v\in \Omega(I_{n+n'}(a+b)).$$
This is clear when  $I_{n+n'}(a+b)=\cI$.
Suppose that $I_{n+n'}(a+b)\neq \cI$.
First note that $\delta(n+n',a_i+b_i)\geq \mathrm{min}\{\delta(n,a_i),\delta(n',b_i)\}$
and that $\mathrm{min}\{\delta(n,a_i),\delta(n',b_i)\} \leq e_i$
for all $i\in\cI$.
Pick an arbitrary $i\notin I_{n+n'}(a+b)$.
Without loss of generality, we suppose that $\mathrm{min}\{\delta(n,a_i),\delta(n',b_i)\}=\delta(n,a_i)$.
If $i\notin I_n(a)$,  we have $v\in\Sigma_i^{\delta(n,a_i)}\subseteq \Sigma_i^{\delta(n+n',a_i+b_i)}$;
hence we have $v\in \Omega(I_{n+n'}(a+b)).$

\medskip If $i\in I_n(a)$, then by definition $\delta(n,a_i) = e_i$. We have  $\delta(n,a_i) \le
 \delta(n',b_i)$ by assumption, hence  $\delta(n',b_i) \ge e_i$. But $\delta(n',b_i) \le e_i$, therefore
 we have $\delta(n',b_i) = e_i$, and hence $i \in I_{n'}(b)$. This implies that
 $i\in I_{n+n'}(a+b)$, and this is a contradiction.
This completes the proof of the lemma.

\medskip
Let $D$ be the subgroup of $\underset{i \in \cI}{\oplus}\ent/p^{e_i}\ent$ generated by the diagonal element $(1,...,1)$, and note that
$D$ is contained in $G$.  Set $$\sha_k(K,K') = G/D.$$

\medskip
\begin{example} Assume that $k  = \rat$, and that $L = \rat ( \sqrt a) \times \rat ( \sqrt b) \times \rat ( \sqrt {ab})$, where $a,b$ are distinct square-free integers.
Set $K = \rat ( \sqrt a)$, $K_1 = \rat (\sqrt b)$ and $K_2 = \rat \sqrt {ab})$. Then with the above notation we have $\cI = \{1,2 \}$,
and $E_1 = E_2 = \rat (\sqrt a, \sqrt b)$, hence $e = e_1 = e_2 =1$.  This implies that either $\sha(K,K') = 0$, or $\sha(K,K') \simeq \ent /2 \ent$. Note that

\medskip

\centerline
{there exists $v \in \Omega_k$ such that $E_1^v$ is a field $\iff$  $\Sigma_1 \cup \Sigma_2 \not = \Omega_k$,}

\medskip
\noindent
hence

\medskip
\centerline {$\sha(K,K') = 0 \iff$ there exists $v \in \Omega_k$ such that $E_1^v$ is a field.}

\medskip Set now $a = 13$, $b = 17$ : then there exists no
$v \in \Omega_k$ such that $E_1^v$ is a field, therefore $\sha(K,K') = \ent / 2\ent$.
Note that it is well-known that the multinorm principle fails in this case (see for instance [CT 14], Proposition 5.1).

\end{example}





\begin{theo}\label{sha_T0_primepower}
{\it Suppose that $K/k$ is a cyclic extension of degree $p^e$, where $p$ is a prime number. Then $\sha^1(k,\hat S_{K,K'})\simeq \sha(K,K')$.}
\end{theo}
\begin{proof}
Consider the dual sequence  of (\ref{e3}),
\begin{equation}\label{e30}
\xymatrix{
  0 \ar[r] & \hat{T}_{K/k} \xrightarrow{\iota}  \rI_{K'/k}(\hat{T}_{E/K'}) \xrightarrow{\rho} \hat{S}_{K,K'} \ar[r] & 0
  },
\end{equation}
and the exact sequence induced by (\ref{e30}),
\begin{equation}\label{e20}
\xymatrix{
   H^1(k,  \hat{T}_{K/k}) \xrightarrow{\iota^1}   H^1(k, \rI_{K'/k}(\hat{T}_{E/K'})) \xrightarrow{\rho^1} H^1(k,  \hat{S}_{K,K'} ) \ra
   H^2(k, \hat{T}_{K/k})
 . }
\end{equation}

We have $\sha^2(k,  \hat{T}_{K/k}) =0$ by Corollary  \ref{cyclic_sha},
therefore $\sha^1(k,\hat{S}_{K,K'})$ is in the image of $\rho^1$.

\medskip
Note that $H^1(k,\rI_{K_i/k}(\hat{T}_{E_i/K_i}))\simeq H^1(K_i,\hat{T}_{E_i/K_i})$, and that by Lemma \ref {product} {(ii)}, we have
$H^1(K_i,\hat{T}_{E_i/K_i}) \simeq H^1(K_i,\hat{T}_{M_i/K_i})$. Moreover, by Lemma \ref {cyclic}, we have $H^1(K_i,\hat{T}_{M_i/K_i}) \simeq
\ent/p^{e_i}\ent$.

\medskip

In the following we identify $H^1(k,\hat T_{K/k})$ to  $\ent/p^e\ent$ and
$H^1(k,\rI_{K_i/k}(\hat{T}_{E_i/K_i}))$ to $\ent/p^{e_i}\ent$ for $1\leq i\leq m$.
Under this identification, the map

$$\iota^1: H^1(k,\hat T_{K/k}) \ra  H^1(k, \rI_{K'/k}(\hat{T}_{E/K'})) = \underset{i \in \cI}{{\oplus}} H^1(k,\rI_{K_i/k}(\hat{T}_{E_i/K_i}))$$
sends $\ent/p^e\ent$ to $\underset{i \in \cI}{{\oplus}} \ent/p^{e_i}\ent$ by the natural projections.
Therefore we can rewrite the exact sequence $(\ref{e20})$ as follows :
\begin{equation}
\xymatrix{
   \ent/p^{e}\ent \xrightarrow{\iota^1} \underset{i \in \cI}{{\oplus}} \ent/p^{e_i}\ent \xrightarrow{\rho^1} H^1(k,\hat{S}_{K,K'} ) \ra
   H^2(k, \hat{T}_{K/k}),
 }
\end{equation}
where $\iota^1$ is the natural projection from $\ent/p^e\ent$ to $\ent/p^{e_i}\ent$ for each $i$. Note that the image of $\iota^1$ is the subgroup $D$, and we
have the exact sequence

\begin{equation}\label{e21}
\xymatrix{
   0 \ra ( \underset{i \in \cI}{{\oplus}} \ent/p^{e_i}\ent )/D  \xrightarrow{\rho^1} H^1(k,\hat{S}_{K,K'} ) \ra
   H^2(k, \hat{T}_{K/k}).
 }
\end{equation}

\medskip
Let $a=(a_1,...,a_m)\in \underset{i \in \cI}{{\oplus}} \ent/p^{e_i}\ent$ and $[a]$ be its image in $(\underset{i \in \cI}{{\oplus}} \ent/p^{e_i}\ent)/D$. We claim that $\rho^1([a])$ is in $ \sha^1(k,\hat{S}_{K,K'} )$ if and only if $a\in G$.

We denote by $a^v$ the image of $a$ in
$\underset{i=1}{\overset{m�}{\oplus}}H^1(k_v,\rI_{K^v_i/k_v}(\hat{T}_{E^v_i/K^v_i}))$, and by $D_v$ the image of $D$ in this sum.

\medskip
By the exact sequence  (\ref{e21}) over $k_v$, we have $\rho^1([a])\in  \sha^1(k,\hat{S}_{K,K'} )$ if and only if  $a^v \in D_v$ for all places $v \in \Omega_k$. Therefore, it
suffices to prove that $a\in G$ if and only if  $a^v \in D_v$ for all places $v \in \Omega_k$.

\medskip
Suppose that $a\in G$, and let $v \in \Omega_k$. Then there exists $n\in \ent/p^{e_1}\ent$ such that $v\in \Omega(I_n(a))$. If $I_n(a)=\cI$, then clearly $a\in D\subseteq G$.
Suppose that $I_n(a)\neq \cI$. This implies that for
each $i\notin I_n(a)$ and for each place $w$ of $K_i$ above $v$, the \'etale algebra $K_i^w\otimes K$ is isomorphic to a product of field extensions of $K_i^w$ of degree at most $\delta(n,i)$.
Let $\delta_i=\delta(n,i)=\delta(n,a_i)$. Note that \[
H^1(k_v,\rI_{K^v_i/k_v}(\hat{T}_{E^v_i/K^v_i})) = H^1(K^v_i,\hat{T}_{E^v_i/K^v_i}). \]
We have
\[
H^1(K^v_i,\hat{T}_{E^v_i/K^v_i}) \simeq\underset{w|v}{\oplus}H^1(K^w_i,\hat{T}_{K^w_i\otimes K/K^w_i})\simeq
\underset{w|v}{\oplus}\ent/p^{e_{i,w}}\ent,\]
where $e_{i,w}\leq\delta_i$, and the localization map  $H^1(K_i,\hat{T}_{E_i/K_i}) \to H^1(K^v_i,\hat{T}_{E^v_i/K^v_i})$
is the canonical projection $\pi_{e_i,e_{i,w}}$ from $\ent/p^{e_{i}}\ent$ to each component $\ent/p^{e_{i,w}}\ent$.
Since for all $i\notin I_n(a)$ we have $e_{i,w}\leq\delta_i$,  and $\pi_{e_i,\delta_i}(a_i)=\pi_{e_1,\delta_i}(n)$, this implies that  $a^v=(n,...,n)^v$.

\medskip
Suppose conversely that $a^v \in D_v$ for all $v\in\Omega_k$ and $a\notin G$.
Then $a\notin D$, and there exists a place $v\in\Omega_k$ such that  $v \not \in  \underset{n\in\ent/p^{e_1}\ent}{\cup} \Omega(I_n(a))$.
Since $a^v \in D_v$, there exists $n'\in\ent/p^e\ent$ such that $a^v=(\iota^1(n'))_v$. Let $n=\pi_{e,e_1}(n')$.
As $v\notin \Omega (I_{n}(a))$, there exists $i\notin I_{n}(a)$ and a place $w$ of $K_i$ above $v$ such that $K_i^w\otimes K$  is isomorphic
to a product of field extensions of degree $p^{e_{i,w}}$ of $K_i^w$, with $e_{i,w} >\delta_i$.
Then by the definition of $\delta_i=\delta(n,a_i)$,we have $\pi_{e_i,e_{i,w}}(a_i)\neq \pi_{e_1,e_{i,w}}(n)$.
Hence the localization $a_i^v$ of the $i$-th coordinate of $a$ is not equal to the localization  of the $i$-th coordinate of $(n,...,n)$, which is a contradiction.
Our claim then follows.
Therefore, we have $\sha^1(k,\hat{S}_{K,K'})\simeq\sha(K,K')$.

\end{proof}

\begin{coro}\label{coro primepower}
{\it Suppose that $K/k$ is a cyclic extension of degree $p^e$, where $p$ is a prime number. Then $\sha^1(k,T_{K,K'})\simeq \sha(K,K')^*$.}

\end{coro}

\medskip
\noindent
{\bf Proof.}
By Lemma \ref{T and S}, we have $\sha^{1}(k,T_{L/k})\simeq\sha^{2}(k,S_{K,K'})$. Theorem \ref{sha_T0_primepower}  implies that
$\sha^1(k,\hat S_{K,K'})\simeq \sha(K,K')$. By  Poitou-Tate duality,  we have $\sha^{2}(k,S_{K,K'}) \simeq \sha^1(k,\hat S_{K,K'})^*$,
hence the corollary is proved.

\subsection {The group $\sha(K/K_0,K')$}

\medskip
Let $K_0$ be the unique subfield of $K$ such that $[K_0:k] = p^{e-1}$. The proof of the main theorem in the prime power case uses induction on $e$, and the
comparison of the groups $\sha(K,K')$ and $\sha(K_0,K')$.
We first define a homomorphism
$F : \sha(K_0,K') \to \sha(K,K'),$  and then determine the cokernel of $F$, denoted by $\sha(K/K_0,K')$.

\medskip
Note that if $e = 1$, then $K_0 = k$, and hence  $\sha (K_0,K')$ is trivial; in this
case, $\sha(K/K_0,K')$ is the group $\sha(K,K')$ itself.

\bigskip
{\bf The homomorphism $ \sha(K_0,K') \to \sha(K,K').$}

\bigskip

Recall that we have $K' =  \underset{i \in \cI} \prod K_i$, that $E_i = K \otimes K_i$, and that $E_i$ is the product of copies of a cyclic extension of
degree $p^{e_i}$ of $K_i$. Set $E_i^0 = K_0 \otimes K_i$. Then $E_i^0$ also splits as a product of copies of a cyclic extension of $K_i$; let us denote by $p^{f_i}$
the degree of this extension.

\medskip
\begin{prop}\label{e and f}
For all $i \in \cI$, we have $f_i \le e_i$. If moreover $e_i \not = 0$, then $e_i = f_i + 1$.

\end{prop}

This is an immediate consequence of the following proposition :

\begin{prop}\label{E and F} Let $F/k$ be a field extension, and let $K \otimes_k F$ be a product of cyclic field extensions  of $F$ of degree $p^{e_F}$;
let $K_0 \otimes_k F$ be a product of cyclic field extensions of $F$ of degree $p^{f_F}$. Then we have

\medskip
{\rm (i)} $f_F \le e_F$;

\medskip
{\rm (ii)} $f_F \ge e_F - 1$;

\medskip
{\rm (iii)} If $e_F \not = 0$, then $e_F = f_F +1$.

\end{prop}

\noindent
{\bf Proof.} If $n$ is a positive integer, let us denote by $C_n$ the cyclic group of order $n$. Let us consider the homomorphisms $$\Gamma_{F}  {\buildrel \iota \over \rightarrow} \Gamma_k {\buildrel {\phi_K} \over \longrightarrow} C_{p^e} {\buildrel \pi \over \longrightarrow} C_{p^{e-1}} \to 1,$$ where $\iota$ is the inclusion of $\Gamma_{F}$ into $\Gamma_k$, the homomorphism $\phi_K : \Gamma_k \to C_{p^e}$ corresponds to the
cyclic extension $K/k$, and $\pi : C_{p^e} \to C_{p^{e-1}}$ is the quotient of $C_{p^e}$ by its unique subgroup of order $p$. Note that the image
of $\phi_K \circ \iota$ is the Galois group of the cyclic factors of $K \otimes_k F$, and hence is of order $p^{e_F}$; similarly, the image of  $\pi \circ \phi_K \circ \iota$ is
the Galois group of the cyclic factors of $K_0 \otimes_kF$, and hence is of order $p^{f_F}$. Therefore we have $f_F \le e_F$. Moreover, if $e_F \not = 0$, then
the image of $\phi_K \circ \iota$ contains the unique subgroup of order $p$ of $C_{p^e}$, and hence $e_F= f_F + 1$. This
completes the proof of the proposition.

\bigskip
For all $i \in \cI$, let $F_i : \ent/p^{f_i} \ent \to \ent/ p^{e_i} \ent$ be the  inclusion of the subgroup
of order $p^{f_i}$ in the group $ \ent/ p^{e_i} \ent$, and set $F_{K/K_0} = F = \underset{i \in \cI} \oplus F_i$.

\medskip
\begin{prop}\label{sha 0 to sha}
The map $F : \underset {i \in \cI} \oplus \ent/p^{f_i} \ent \to  \underset {i \in \cI} \oplus \ent/ p^{e_i} \ent$  induces an injective
homomorphism $F : \sha(K_0,K') \to \sha (K,K')$.

\end{prop}

\medskip
\noindent
{\bf Proof.} Let us recall some notation from 5.1, for $K$ and $K_0$ :
For all $i \in \cI$ and for all positive integers $d$, we denote by $\Sigma(K)_i^d$ (respectively  $\Sigma(K_0)_i^d$) the set of all places $v\in\Omega_k$ such that at each place $w$ of $K_i$ above $v$, the algebra
$K \otimes K_i^w$  (respectively $K_0 \otimes K_i^w$ ) is isomorphic to a product copies of a cyclic extension of degree at most $p^d$ of $K^w_i$.
Recall that

\[G = G(K,K') =\{ a \in \underset {i \in \cI} \oplus \ent/p^{e_i}\ent  \ \ |    \ \underset{n\in\ent/p^{e_1}\ent}{\bigcup} \Omega (I_n(a))=\Omega_k\},
\]
and that $D$ is the diagonal subgroup of $G$. Similarly, set

\[G_0 = G(K_0,K') =\{ b \in \underset {i \in \cI} \oplus \ent/p^{f_i}\ent  \ \ |    \ \underset{n\in\ent/p^{f_1}\ent}{\bigcup} \Omega (I_n(b))=\Omega_k\},
\]
and let $D_0$, be the diagonal subgroup of $G_0$.  Then we have $\sha(K,K') = G/D$ and $\sha(K_0,K') = G_0/D_0$.

\medskip
Let $b \in G_0$, and let us show that $F(b) \in G$. Let $v \in \Omega_k$. Then there exists $r \in \ent/p^{f_1}\ent$ such that $v \in \Sigma(K_0)_i^{\delta(r,i)}$ for all $i \in \cI$
such that $i \not \in I_r(b)$. Note that for all positive integers $\delta$, we have $\Sigma(K_0)_i^{\delta} \subset \Sigma(K)_i^{\delta + 1}$. Set $n = F_1(r) \in \ent /p^{e_1} \ent$;
then we have $\delta(n,F_i(b_i) ) = \delta (r,b_i) + 1$. Hence we have $v \in \Sigma(K)_i^{\delta (r,b_i) +1}$, and therefore $F(b) \in G$.

\medskip
It is clear that $F$ is injective.

\bigskip

\begin {remark} \label {injective}
For any subextension $N/k$ of $K/k$, let $F_{K/N} : \sha(N,K') \to \sha(K,K')$ be the injective homomorphism obtained by successive applications of Proposition \ref{sha 0 to sha}.
\end {remark}

\bigskip
{\bf The group $\sha(K/K_0,K')$.}

\bigskip
As we will see, the cokernel of $F$ is
isomorphic to the group $\sha(K/K_0,K')$, defined as follows :

\medskip
For all $i \in \cI$, set $r_i = {\rm min} \{1,e_i \}$. For all  $c  \in \underset {i \in \cI} \oplus \ent/p^{r_i}\ent$ and $n \in \ent / p \ent$, set $I^1_n(c) = \{ i \in \cI \ \ | n \succeq  c_i \}$.
If $I^1_n(c) \not = \cI$, set $\Omega (I^1_n(c)) = \underset{i\notin I^1_n(c)}{\cap}{\Sigma}_i$; if $I^1_n(c) = \cI$, set $\Omega (I^1_n(c)) = \Omega_k$. Set

\[G(K/K_0,K') =\{c \in \underset {i \in \cI} \oplus \ent/p^{r_i}\ent  \ \ |    \ \underset{n\in\ent/p^{r_1}\ent}{\bigcup} \Omega (I^1_n(c))=\Omega_k\},
\]
let $D(K/K_0,K')$ be the diagonal subgroup of $G(K/K_0,K')$, and set $$\sha(K/K_0,K') = G(K/K_0,K') / D(K/K_0,K').$$

\medskip
\begin{lemma}\label{K/K_0}
The projection $\pi : \underset {i \in \cI} \oplus \ent/p^{e_i} \ent \to  \underset {i \in \cI} \oplus \ent/ p^{r_i} \ent$  induces a
homomorphism $\pi : \sha(K,K') \to \sha (K/K_0,K')$.

\end{lemma}

\medskip
\noindent
{\bf Proof.} Let $a \in G$, and set $\overline a = \pi(a)$. Let us show that $\overline a \in G(K/K_0,K')$. Let $v \in \Omega_k$; then there exists $s \in \ent / p^{e_1} \ent$ such
that $v \in \Omega(I_s(a))$. Set $n = \pi_{e_1,1}(s)$, and let us prove that $v \in \Omega (I^1_n(\overline a))$. This is clear if $ I^1_n(\overline a) = \cI$. Suppose that $ I^1_n(\overline a)
\not =  \cI$. If $i \in \cI$ is such that $i \not \in  I^1_n(\overline a)$, then we have $i \not \in I_s(a)$, and therefore $v \in \Sigma_i^{\delta(s,a_i)}$. Since $n = \pi_{e_1,1}(s)$ and
$i \not \in  I^1_n(\overline a)$, we have ${\delta(s,a_i)} = 0$, and hence $v \in \Sigma_i$. Therefore we have $\overline a \in G(K/K_0,K')$, as claimed, and this completes
the proof of the lemma.

\medskip
\begin{prop}\label{exact}
The sequence  $$0 \to  \sha(K_0,K') \buildrel {F} \over \longrightarrow \sha (K,K') \buildrel {\pi} \over \longrightarrow \sha(K/K_0,K') \to 0$$
is exact.

\end{prop}

\medskip
\noindent
{\bf Proof.} It is clear that $F$ is injective, and that $\pi \circ F = 0$; it remains to check that $\pi$ is surjective, and that ${\rm Ker}(\pi) \subset {\rm Im}(F)$. Let
us check the second assertion first. Let $a \in \sha(K,K')$ be such that $\pi (a) = 0$. Then there exists $b \in  \underset {i \in \cI} \oplus \ent/p^{f_i} \ent$
such that $F(b) = a$; let us check that $b \in \sha(K_0,K')$. Let $v \in \Omega_k$. Then there exists $n \in \ent/p^{e_1}\ent$ such that $v \in \Omega(I_n(a))$.
If $i \in I_n(a)$, then we have $\pi_{e_1,e_i}(n) = a_i$. Since $a_i = F_i(b_i)$, this implies that there exists $r \in \ent/p^{f_1}\ent$ such that
$n = F_1(r)$ and $I_n(a)=I_r(b)$. Let us show that $v \in \Omega(I_r(b))$. For all $i \in \cI$ such that $i \not \in I_n(a)$, we have $v \in \Sigma(K)_i^{\delta(n,a_i)}$.
Note that $\delta(n,a_i) = \delta(r,b_i) + 1$ and $[K:K_0]=p$. Hence $v \in \Sigma(K)_i^{\delta(n,a_i)}$
implies that $v \in \Sigma(K_0)_i^{\delta(r,b_i)}$. Therefore we have $v \in \Omega(I_r(b))$,
as claimed, and this implies that $b \in \sha(K_0,K')$. Let us now prove that $\pi$ is surjective. Let $\overline a \in \sha(K/K_0,K')$. For each $n \in \ent/p\ent$,
let us fix a lifting $r(n) \in \ent/p^{e_1}\ent$. If $i \in I^1_n(\overline a)$, set $a_i = \pi_{e_1,e_i}(r(n))$. Let us check that $a_i \in \ent/p^{e_i}\ent$ is
well-defined. Suppose that $n_1, n_2 \in \ent/p \ent$ are such that $i \in I^1_{n_1}(\overline a) \cap I^1_{n_2}(\overline a)$; then we have
$\pi_{1,r_i}(n_1) = \pi_{1,r_i}(n_2)$. If $n_1 \not = n_2$, then this implies that $r_i = 0$, hence $e_i = 0$. We have $\pi_{e_1,e_i}(r(n_1)) =
\pi_{e_1,e_i}(r(n_2))$ in this case, hence $a_i$ is well-defined. Let us check that $a \in \sha(K,K')$. Since $\overline a \in \sha(K/K_0,K')$, we have
 $\underset{n\in\ent/p^{r_1}\ent}{\bigcup} \Omega (I^1_n(\overline a))=\Omega_k$. Let $v \in \Omega_k$; then there exists $n \in \ent/p \ent$ such
 that $v \in  \Omega (I^1_n(\overline a))$. Let $r = r(n)$; we claim that $v \in  \Omega (I_r( a))$. If $I^1_n(\overline a) = \cI$, then we have
$I_r(a) = \cI$, and the claim is clear. Suppose that $I^1_n(\overline a) \not = \cI$. If $i \not \in I_r(a)$, then we have $i \not \in I^1_n(\overline a)$ by
construction, hence $v \in \Sigma_i$. Since $\Sigma_i \subset \Sigma_i(K)^{\delta(r,a_i)}$, the claim follows. This completes the proof of
the proposition.

\bigskip
{\bf The group $\sha(K/K_0,K')$ and partitions.}

\medskip
In this section we give some properties of $G(K/K_0,K')$ in terms of partitions of $\cI$.
This will be useful in the proof of the main theorem (Thm. 7.1).

\medskip
\noindent
\begin{lemma}\label{partition} The set $G(K/K_0,K')$ is in bijective correspondence with the partitions $(J_0,...,J_{p-1})$ of
the set $ \{ i \in \cI \ | \ r_i = 1 \}$ such that $\underset{n\in\ent/p\ent}{\cup} \Omega(J_n)=\Omega_k$.

\end{lemma}

\medskip
\noindent
{\bf Proof.}
Recall that we have $r_i = 0$ or $1$. Set $\cI' = \{ i \in \cI \ | \ r_i = 1 \}$. Let
$a \in G(K/K_0,K')$, and set $I^{1}({a})\cap \cI' =(I_{0}^1(a)\cap \cI',...,I^{1}_{p-1}(a)\cap \cI')$; note
 that $I^{1}({a})\cap \cI'$ is a partition of $\cI'$.
Hence the set $G(K/K_0,K')$ is then in bijective correspondence with the partitions $(J_0,...,J_{p-1})$ of $\cI'$ such that $\underset{n\in\ent/p\ent}{\cup} \Omega(J_n)=\Omega_k$, as claimed.

\medskip
In the sequel, we identify $G(K/K_0,K')$ with the set of these partitions. We also note a consequence for the case where $K$ is of degree $p$, in other words,
if $e = 1$. If $K/k$
is of prime degree, then either $E_i$ is a field extension of $K_i$, or $E_i$ is a product of copies of $K_i$. Let $J$ be the subset of $I$ such that $E_i$ is a field extension
of $K_i$ if $i \in J$, and that $E_i$ is a product of copies of $K_i$ if $i \not \in J$.

\medskip
\noindent
\begin{lemma}\label{primedegreepartition} Assume that $K/k$ is a degree $p$ extension. Then $G(K,K')$ is in bijective correspondence with
the partitions $(J_0,...,J_{p-1})$ of $J$
such that $\underset{n\in\ent/p\ent}{\cup} \Omega(J_n)=\Omega_k$.

\end{lemma}

\medskip
\noindent
{\bf Proof.} Since $e=1$, we have $K_0 = k$ and $G(K/K_0,K') = G(K,K')$. Hence the lemma follows from Lemma \ref{partition}.

\bigskip

Let $K_{\rm prim}$  be the unique subfield of $K$ of degree $p$ over $k$.

\medskip
\begin{prop}\label{prim} Assume that $K_{\rm prim}$ is linearly disjoint from $K_i$ for all $i\in I$. Then the group $\sha(K/K_0,K')$ is a subgroup of $\sha(K_{\rm prim},K')$.

\end{prop}

\medskip
\noindent
{\bf Proof.}
Note that $G(K_{\rm prim},K')$ is in bijection with the set of partitions $(I_0,\dots,I_{p-1})$ of $I$
such that $\underset{n\in\ent/p\ent}{\cup} \Omega_{K_{\rm prim}}(I_n)=\Omega_k$, where
$ \Omega_{K_{\rm prim}}(I_n) = \cap_{i \not \in I_n} \Sigma_i(K_{\rm prim})$ (see Lemma
\ref{primedegreepartition}).

\medskip
As $K_{\rm prime}$ is linearly disjoint from $K_i$, $r_i=1$ for all $i\in I$. Hence by Lemma
\ref{partition}, the set
$G(K/K_0,K')$ is in bijection with the set of partitions $(I_0,\dots,I_{p-1})$ of $I$
such that $\underset{n\in\ent/p\ent}{\cup} \Omega_{K}(I_n)=\Omega_k$, where
$ \Omega_{K}(I_n) = \cap_{i \not \in I_n} \Sigma_i(K)$.

\medskip Note that $\Sigma_i(K_{\rm prim}) \supseteq \Sigma_i(K)$ for all $i \in \cI$. Hence
$G(K/K_0,K') \subset G(K_{\rm prim},K')$, and this implies that
$\sha(K/K_0,K')$ is a subgroup of $\sha(K_{\rm prim},K')$.

\medskip
\begin{prop}\label{prim prim} Assume that $K_{\rm prim}$ is linearly disjoint from $K_i$ for all $i\in I$. If
 $\sha(K_{\rm prim},K') = 0$,  then $\sha(K,K') = 0$.

\end{prop}

\medskip
\noindent
{\bf Proof.} By Proposition \ref{prim}, we have $\sha(K/K_0,K') =0$; hence
Proposition \ref{exact} implies that $\sha(K,K') = \sha(K_0,K')$. Repeating this argument,
 we see that $\sha(K,K') = \sha(K_{\rm prim},K')$. But $\sha(K_{\rm prim},K') = 0$ by
 hypothesis, hence $\sha(K,K') = 0$, as claimed.

\medskip The following lemma will be useful in the sequel.
\noindent
\begin{lemma}\label{simpl}
{\it Let $(I_0,\dots,I_{p-1}) \in G(K/K_0,K')$, and let $r, r'$ be two distinct elements of $\ent/p\ent$. Set $J_r = I_r$, $J_{r'} = \underset {n \not = r} \cup I_n$, and
$J_n = \emptyset$ if $n \not = r,r'$. Then $(J_0,\dots,J_{p-1}) \in G(K/K_0,K')$. If moreover  $(I_0,\dots,I_{p-1}) \not  \in D(K/K_0,K')$ and $I_r \not = \emptyset$, then
$(J_0,\dots,J_{p-1}) \not  \in D(K/K_0,K')$.}
\end{lemma}

\noindent
{\bf Proof.} Let us show that $\Omega(J_r) \cup \Omega(J_{r'}) = \Omega_k$. Let $v \in \Omega_k$ be such that $v \not \in \Omega(J_r)$. Since we
have $\underset{n\in\ent/p\ent}{\cup} \Omega(I_n)=\Omega_k$, there exists $n(v) \in \ent/p \ent$ with $n(v) \not = r$ such that $v \in \Omega (I_{n(v)})$.
Since $n(v) \not = r$, we have $\Omega (I_{n(v)}) \subset \underset {i \in I_r} \cap \Sigma_i = \Omega (J_{n'})$. Therefore we have $\Omega(J_r) \cup \Omega(J_{r'}) = \Omega_k$, and hence  $(J_0,\dots,J_{p-1}) \in G(K/K_0,K')$.

\medskip
Let us prove the second statement. If $(J_0,\dots,J_{p-1})  \in D(K/K_0)$, then either $J_r = \cI'$ or $J_{r'} = \cI'$; we have $I_r = \cI'$ in the first case,
hence $(I_0,\dots,I_{p-1})   \in D(K/K_0,K')$, and $I_r = \emptyset$ in the second case. This completes the proof of the lemma.

\subsection{The general case}

Recall that $K/k$ is a cyclic extension of degree $d$, and let $\cP$ be the set of prime numbers dividing $d$. For all $p \in \cP$,
 let $K(p)$ be the largest subfield of $K$ such that $[K(p):k]$ is a power of $p$ and set $d(p)=[K(p):k]$. Set
$$\sha (K,K') =  \underset{p \in \cP}{\oplus}  \sha(K(p),K').$$

\begin{prop}\label{sha_isom} We have $\sha^1(k,\hat S_{K,K'})\simeq \sha(K,K')$.

\end{prop}

\medskip
\noindent
{\bf Proof.} By \ref {sha_T0_primepower} we have $\sha^1(k,\hat S_{K(p),K'})\simeq \sha(K(p),K')$, hence it suffices to show that
$\sha^1(k,\hat S_{K,K'})\simeq \underset{p \in \cP}{\prod}  \ \sha^1(k,\hat S_{K(p),K'}).$ For every $p \in \cP$, set $E(p) = K(p) \otimes_k K'$ and $L(p) = K(p) \times K'$.
The inclusion $K(p) \to K$ induces maps $\epsilon_p : T_{K(p)/k} { \to} T_{K/k}$, $\epsilon_p : T_{E(p)/K'} \to T_{E/K'}$ and $\epsilon_p :  S_{K(p),K'} \to S_{K,K'}$. We have the commutative diagram, coming from cohomology exact sequences
associated to the dual sequences of $(\ref{e3})$ :

\begin{equation}\label{e32}
\xymatrix{
   H^1(k,\hat{T}_{K/k}) \ar[d]_{\oplus \hat\epsilon_p^1} \ar[r]^-{\iota^1} & H^1(k,\rI_{K'/k}(\hat{T}_{E/K'})) \ar[d]_{\oplus\hat\epsilon_p^1} \ar[r]^-{\rho^1}& H^1(k,\hat{S}_{K,K'}) \ar[d]_{\oplus\hat\epsilon_p^1} \ar[r]&
   ...\\
   \underset{p \in \cP}{\oplus}H^1(k,\hat{T}_{K(p)/k}) \ar[r]^-{\iota^1} & \underset{p \in \cP}{\oplus}H^1(k,\rI_{K'/k}(\hat{T}_{E(p)/K'})) \ar[r]^-{\rho^1}&
   \underset{p \in \cP}{\oplus} H^1(k,\hat{S}_{K(p),K'}) \ar[r]&
    ...}
  \end{equation} where the vertical maps are induced by the maps $\epsilon_p$. Set $\hat\epsilon^1=\oplus\hat\epsilon_p^1$.

\medskip For all $i \in \cI$, let $M_i$ be a cyclic extension of $K_i$ such that $E_i$ is isomorphic to a product of copies of $M_i$, and let $d_i = [M_i:K_i]$. If $p$ is a prime
divisor of $[K:k]$, set $E_i(p) = K(p) \otimes_k K_i$, and let $M_i(p)$ be a cyclic extension of $K_i$ such that $E_i(p)$ is isomorphic to a product of copies of $M_i(p)$; set
$d_i(p) = [M_i(p):K_i]$. Note that $d_i(p)$ is the highest power of $p$ dividing $d_i$, and that $d_i = \underset {p \in \cP} \prod d_i(p)$.

\medskip
Note that $H^1(k,\rI_{K_i/k}(\hat{T}_{E_i/K_i}))\simeq H^1(K_i,\hat{T}_{E_i/K_i})$, and that by Lemma \ref {product} {(ii)}, we have
$H^1(K_i,\hat{T}_{E_i/K_i}) \simeq H^1(K_i,\hat{T}_{M_i/K_i})$. Moreover, by Lemma \ref {cyclic}, we have $H^1(K_i,\hat{T}_{M_i/K_i}) \simeq
\ent/{d_i}\ent$. Similarly, we have $H^1(k,\rI_{K_i(p)/k}(\hat{T}_{E_i(p)/K_i}))\simeq \ent/{d_i(p)}\ent$.
Note that the morphism $\epsilon^1_p$ restricted to each $H^1(k,\rI_{K_i/k}(\hat{T}_{E_{i}/K_i}))$
\[\hat\epsilon^1_p :
H^1(k,\rI_{K_i/k}(\hat{T}_{E_{i}/K_i}))\simeq \ent/d_i\ent\to
H^1(k,\rI_{K_i/k}(\hat{T}_{E_{i}(p)/K_i}))\simeq \ent/d_{i}(p)\ent
\]
is the canonical projection $\ent/d_i\ent\to\ent/d_{i}(p)\ent$.
Hence by the Chinese reminder theorem, the morphism $\hat\epsilon^1$ restricted to each $H^1(k,\rI_{K_i/k}(\hat{T}_{E_{i}/K_i}))$
\[\underset{p \in \cP}{\oplus} \ \hat\epsilon^1_p : H^1(k,\rI_{K_i/k}(\hat{T}_{E_{i}/K_i}))\to
\underset{p \in \cP}{\oplus}H^1(k,\rI_{K_i/k}(\hat{T}_{E_{i}(p)/K_i}))\] is an isomorphism.
Similarly, \[\underset{p \in \cP}{\oplus} \ \hat\epsilon^1_p : H^1(k,\hat{T}_{K/k})\simeq\ent/d\ent\to
\underset{p \in \cP}{\oplus}H^1(k,\hat{T}_{K(p)/k})\simeq \underset{p \in \cP} {\oplus}\ent/d(p)\ent\] is also an isomorphism.

\medskip

By Corollary  \ref{cyclic_sha}, we have
$\sha^2(k, \hat{T}_{K/k})=0$ and $\sha^2(k, \hat{T}_{K(p)/k} ) =0$, hence $\sha^1(k,\hat{S}_{K,K'})$  and $\sha^1(k,\hat{S}_{K(p),K'})$ are in the image of  the maps $\rho^1$.
Now we show that $\hat\epsilon^1:\sha^1(k,\hat{S}_{K,K'})\to\underset{p \in \cP}{\oplus}\sha^1(k,\hat{S}_{K(p),K'})$
is injective.
Let $\beta\in\sha^1(k,\hat{S}_{K,K'})$. Suppose that $\hat\epsilon^1(\beta)=0$.
As $\sha^1(k,\hat{S}_{K,K'})$ is in the image of $\rho^1$, there is $\gamma\in H^1(k,\rI_{K'/k}(\hat{T}_{E/K'}))$ such that $\rho^1(\gamma)=\beta$.
By the commutativity of the diagram, we have $\rho^1(\hat\epsilon^1(\gamma))=\hat\epsilon^1(\rho^1(\gamma))=0$.
Hence there is $\zeta\in\underset{p \in \cP}{\oplus}H^1(k,\hat{T}_{K(p)/k})$ such that $\iota^1(\zeta)=\hat\epsilon^1(\gamma)$. Then $\hat\epsilon^1\circ\iota^1\circ(\hat\epsilon^1)^{-1}(\zeta)=\hat\epsilon^1(\gamma)$.
As \[\hat\epsilon^1 : H^1(k,\rI_{K'/k}(\hat{T}_{E/K'}))\to
\underset{p \in \cP}{\oplus}H^1(k,\rI_{K'/k}(\hat{T}_{E(p)/K'}))\] is an isomorphism,
we have $\iota^1\circ(\hat\epsilon^1)^{-1}(\zeta)=\gamma$ and hence $\beta=\rho^1(\gamma)=0$.
This proves the injectivity.

\medskip

As
$\sha^2(k, \hat{T}_{K(p)/k} ) =0$ by Corollary \ref{cyclic_sha}, the group $\sha^1(k,\hat{S}_{K(p),K'})$
is in the image of  the maps $\rho^1$ for all $p\in \cP$.
Since $H^1(k,\rI_{K_i/k}(\hat{T}_{E_{i}/K_i}))\to
\underset{p \in \cP}{\oplus}H^1(k,\rI_{K_i/k}(\hat{T}_{E_{i}(p)/K_i}))$ is an isomorphism,
 we see that  $$\hat\epsilon^1:\sha^1(k,\hat{S}_{K,K'})\to\underset{p \in \cP}{\oplus}\sha^1(k,\hat{S}_{K(p),K'})$$
 is surjective. This completes the proof of the proposition.

\bigskip
Note that the proposition, together with Lemma \ref{T and S}, implies that $\sha(K,K')$ does not depend on the decomposition of $L$ as $L = K \times K'$. We will also use
the notation $\sha(L) = \sha(K,K')$, where $L = K \times K'$ is any decomposition of $L$ with $K/k$ a cyclic extension.

\medskip
In summary, we proved

\medskip
\begin{coro}\label{5.10} We have $\sha(L)^* \simeq \sha^1(k,T_{L/k})$.

\end{coro}

\medskip
\begin {example} Let $p$ and $q$ be two distinct odd prime numbers, with $p > q$. For all positive integers $n$, let $\zeta_n$ be a primitive $n$th root of unity. Let $k = {\bf Q}$, and
 $$L = \rat (\zeta_{p^2} ) \times \rat (\zeta_{pq}) \times \rat (\zeta_{q^2} ).$$
 Since $\rat (\zeta_{p^2}) $ and $\rat (\zeta_{q^2} )$ are both cyclic, we can determine $\sha(L)$ in two ways; this shows that the order of $\sha(L)$ divides
 $p-1$, and that
  $$\sha(L) = \sha(\rat (\zeta_{p} ) \times \rat (\zeta_{pq}) \times \rat (\zeta_{q^2} )).$$
 But since $ \rat (\zeta_{p}) $ is a subfield of $\rat (\zeta_{pq})$, we have $ \sha(\rat (\zeta_{p} ) \times \rat (\zeta_{pq}) \times \rat (\zeta_{q^2} ))
 =  \sha(\rat (\zeta_{p} ) \times \rat (\zeta_{q^2} ))$. Note that by Proposition \ref{hurlimann} we have $\sha(\rat (\zeta_{p} ) \times \rat (\zeta_{q^2} )) = 0$, hence we have
 $$\sha(L) = 0.$$

\end {example}

\section{The Brauer-Manin map}

\medskip

We keep the notation of the previous section : in particular,  $L = K \times K'$, where $K$ is a cyclic extension of $k$ of degree $d$, $K'$ is an \'etale $k$-algebra, and $E = K \otimes K'$.
We write $K' = \prod_{i \in \cI} K_i$, where the $K_i/k$ are field extensions, and $E = \prod_{i \in \cI} E_i$, with $E_i = K \otimes K_i$.
The group $\sha(L) = \sha(K,K')$ is defined in the previous section.

\medskip

Let $c \in k^{\times}$ and recall that $X_c$ is the affine $k$-variety defined by the equation $$N_{L/k}(t)=c.$$

Assume that  $X_c(k_v) \not  = \emptyset$ for all $v \in \Omega_k$. In the following, we define a homomorphism $\alpha_c : \sha(L) \to \rat/\ent$ such that
$X_c(k) \not = \emptyset$ if and only if $\alpha_c = 0$;  the map $\alpha_c$ will be called the {\it Brauer-Manin map} associated to $c$.
We choose this terminology, because this map is an analog of the map given by the Brauer-Manin pairing on $X_c$.

\medskip
\subsection{Local points}

\medskip We start with some preliminary results. We are assuming that ${\prod}   X_c(k_v) \not = \emptyset$; as we will see, this set contains elements
satisfying certain finiteness
conditions. More precisely, we introduce the notion of {\it local points} - these can be thought of as adelic points of $X_c$.

\medskip We first recall the notion of cyclic algebra.
Let us choose a generator $g$ of the cyclic group ${\rm Gal}(K/k)$, and let $\phi : \Gamma_k \to \ent/d\ent$ be given by the composition of the isomorphism ${\rm Gal}(K/k) \to \ent/d\ent$ sending $g$ to $1$ with the surjection $\Gamma_k \to {\rm Gal}(K/k)$. Let us consider the exact sequence $0 \to \ent  {\buildrel {\times d} \over \longrightarrow} \ent \to \ent/d\ent
\to 0$,
and let $\delta : H^1(k,\ent / d\ent) \to H^2(k, \ent)$ be the connecting homomorphism of the associated cohomology exact sequence. If $c \in k^{\times}$, let us denote by $(c)$ the
corresponding element of $H^0(k,\gm_m$). The cup product  $\delta(\phi).(c)$ is an element of $H^2(k,\gm_m)$, and via the identification $H^2(k,\gm_m) \simeq {\rm Br}(k)$ it is
mapped to the class of the cyclic algebra defined by $K$ and $c$ (see for instance \cite {GS}, Proposition 4.7.3). We denote this cyclic algebra by $(K,c)$.

\medskip

The first observation is the following:

\begin{lemma}\label{split-inv}
{\it
Suppose that $E_i$ is isomorphic to a product of copies of a field $M_i$, and set $[M_i:K_i]=d_i$. Then  for any $x \in K_i^\times$, the order of the cyclic algebra $(K,N_{K_i/k}(x))$  divides $d_i$. In particular, if $d_i=1$, then  for any $x \in K_i^\times$, the  algebra $(K,N_{K_i/k}(x))$ splits. }
\end{lemma}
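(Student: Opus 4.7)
The plan is to realise the cyclic algebra $(L_0,c)$ as a cup product in Galois cohomology and then transfer the computation from $k$ to $L_i$ via the projection formula for corestriction. Writing $N_{L_i/k}(x_i)=\cor_{L_i/k}(x_i)$, the corestriction of $x_i$ viewed as an element of $\rH^{0}(\Gamma_{L_i},(k^s)^\times)$, and using the identification $(L_0,c)=\delta(\tilde{\chi})\cup c$ in $\Br(k)$, the projection formula together with the fact that coboundary commutes with restriction gives
\begin{equation*}
(L_0, N_{L_i/k}(x_i)) \;=\; \delta(\tilde{\chi})\cup \cor_{L_i/k}(x_i) \;=\; \cor_{L_i/k}\bigl(\delta(\tilde{\chi}|_{L_i})\cup x_i\bigr)\ \in\ \Br(k).
\end{equation*}

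The key observation is then that the restricted character $\tilde{\chi}|_{L_i}:\Gamma_{L_i}\to \ent/q\ent$ is annihilated by $d_i$. Indeed, its kernel is $\Gamma_{L_i}\cap \Gamma_{L_0}=\Gamma_{L_0 L_i}$, and since $L_0/k$ is Galois, the hypothesis $E_i\simeq M_i^{n_i}$ forces $M_i$ to be isomorphic to the compositum $L_0 L_i$ as an extension of $L_i$ (inside a fixed separable closure). Hence the image of $\tilde{\chi}|_{L_i}$ has order $[M_i:L_i]=d_i$, so $d_i\cdot\tilde{\chi}|_{L_i}=0$ in $\rH^{1}(L_i,\ent/q\ent)$, whence $d_i\cdot\delta(\tilde{\chi}|_{L_i})=0$ in $\rH^{2}(L_i,\ent)$ and therefore $d_i\cdot\bigl(\delta(\tilde{\chi}|_{L_i})\cup x_i\bigr)=0$ in $\Br(L_i)$.

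Applying $\cor_{L_i/k}$ to this vanishing and using the formula of the first paragraph yields $d_i\cdot (L_0, N_{L_i/k}(x_i))=0$ in $\Br(k)$, so the order of the cyclic algebra divides $d_i$ as claimed. The ``in particular'' part is then immediate: when $d_i=1$ the order divides $1$, so the algebra is split for every $x_i\in L_i^\times$, and a fortiori for every $x_i\in k^\times$.

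The main potential obstacle is not really an obstacle at all: one only has to be slightly careful to identify $M_i$ with the compositum $L_0 L_i$ inside $k^s$, which is automatic from the paper's convention of viewing $L_0$ and $L_i$ as subfields of a fixed separable closure, and then everything reduces to the standard compatibility between cup products, restriction, and corestriction in Galois cohomology.
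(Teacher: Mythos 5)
Your proof is correct and is essentially the same as the paper's: both pass to $\Br(L_i)$ via the projection formula for corestriction, observing that $(L_0,N_{L_i/k}(x_i))=\cor_{L_i/k}\bigl(\delta(\tilde\chi|_{L_i})\cup x_i\bigr)$, and both bound the order on the $L_i$-side by $d_i=[M_i:L_i]$. The only cosmetic difference is that the paper names the $L_i$-algebra as $(M_i,x_i)$ and quotes that a cyclic algebra over a degree-$d_i$ extension has period dividing $d_i$, whereas you unwind that same fact through the order of the image of $\tilde\chi|_{L_i}$.
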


\begin{proof}
Given $x \in K_i^\times$, consider the class of the cyclic algebra $(M_i,x) =\delta(\phi|_{K_i}).(x)$, where $\phi|_{K_i} : \Gamma_{K_i} \to \ent / d\ent$ is the restriction of $\phi$ to $\Gamma_{K_i}$.
Let $r$ be the order of $(M_i,x)$ in $\Br(K_i)$.
Since $M_i$ is of degree $d_i$ over $K_i$,  we have $r|d_i$. By the projection formula (\cite{GS} Prop. 3.4.10), the corestriction of $(M_i,x)$ is $(K,N_{K_i/k}(x))$. Therefore
the order of $(K,N_{K_i/k}(x))$ divides $d_i$.

\end{proof}

\medskip

Let $x = (x^v) \in\underset{v\in\Omega_k}{\prod}   X_c(k_v)$,  and let us write $x^v = (x_0^v,x_1^v,\dots,x_m^v)$, with $x_0^v \in K^v$ and $x_i^v \in K_i^v$ for $i \in \cI$.
Let us consider the invariant map  $\inv:\Br(k_v)\to\rat/\ent$
and  set $b^v_i(x) = b^v_i(x^v_i) = \inv(K^v,N_{K_i^v/k_v}(x_i^v))\in \frac{1}{d_i}\ent/\ent\subset\rat/\ent$. Note that if $d_i$ is
odd, then $b^v_i(x) = 0$ for all infinite places $v \in \Omega_k$.

\medskip
We say that $x = (x_i^v) \in\underset{v\in\Omega_k}{\prod}   X_c(k_v)$ is a {\it local point} of $X_c$ if for each $i \in \cI$, we have $b_i^v(x) = 0$ for almost all $v \in \Omega_k$.
The following lemma implies the existence of local points whenever $\underset{v\in\Omega_k}{\prod}   X_c(k_v) \not =  \emptyset$.

\medskip

\begin{lemma}\label{almost_zero} Assume that $\underset{v\in\Omega_k}{\prod}   X_c(k_v) \not =  \emptyset$. Then there exists $$x = (x_i^v) \in\underset{v\in\Omega_k}{\prod}   X_c(k_v)$$ such that $b_i^v(x) = 0$ for almost all $v \in \Omega_k$, and for all $i \in \cI$.

\end{lemma}
\begin{proof}
For each $v\in\Omega_k$ such that $(K,c)^v$ is split, there exists $x^v_0\in K^v$ such that $N_{K^v/k_v}(x_0^v) = c$.  Then
$x = (x^v_0,1,...,1)$ is a $k_v$-point of $X_c$,  and
$b_i^v (x) =0$ for all $i \in \cI$.
Since $(K,c)^v$ is split for almost all places $v\in\Omega_k$, the lemma follows.
\end{proof}

\medskip
We now prove some properties of local points which will be used later. The next lemma is an analog of the classical reciprocity
formula that appears in the classical Brauer-Manin obstruction.

\medskip

\begin{lemma}\label{well_def}
{\it Let  $x = (x_i^v) \in\underset{v\in\Omega_k}{\prod}   X_c(k_v)$ be a local point of $X_c$.
Then we have  $$\underset{v\in\Omega_k}{\sum}\underset{i \in \cI}{{\sum}} b^v_i(x)=0.$$}
\end{lemma}
\medskip
\noindent
{\bf Proof.} Let us write  $x^v = (x_0^v,x_1^v,\dots,x_m^v)$, with $x_0^v \in K^v$ and $x_i^v \in K_i^v$ for $i \in \cI$. For all $i \in \cI$, set  $y_i^v=N_{L_i^v/k_v}(x_i^v)$.
Set  $y_0^v = N_{K^v/k_v}(x^v_0)$, and note that $y_0^v \underset{i \in \cI}{\prod} y_i^v=c$. We have
 \begin{center}
 $\underset{i \in \cI}{\sum} b^v_i(x) =\underset{i \in \cI}{\sum}{\rm inv}(K^v,y_i^v)= {\rm inv}(K^v, \underset{i \in \cI} {\prod} y_i^v) =
{\rm inv}(K^v,c/y_0^v)= {\rm inv}(K^v,c).$
 \end{center}
 Since $c\in k^\times,$ the Brauer-Hasse-Noether Theorem implies that  $\underset{v\in\Omega_k}{\sum} {\rm inv}(K^v,c) =0$. Hence we have
 $\underset{v\in\Omega_k}{\sum}\underset{i \in \cI}{{\sum}} b^v_i(x)=0$, as claimed.

 \medskip

\begin{lemma} \label{modify_condition}
{\it Let $x = (x_i^v)$ be a local point of $X_c$, and set $b_i^v= b_i^v(x) = {\rm inv}(K^v, N_{K_i^v/k_v}(x_i^v))$.
For all $i \in \cI$, let $\tilde{x}_i^v\in K_i^v$ and  set $$\tilde{b}_i^v= {\rm inv}(K^v, N_{K_i^v/k_v}(\tilde{x}_i^v)).$$
Suppose that for all $i \in \cI$ we have $\tilde b_i^v=0$ for almost all $v\in\Omega_k$, and that
$\underset{i \in \cI}{{\sum}}b_i^v=\underset{i \in \cI}{{\sum}}\tilde {b}_i^v$ for all $v\in\Omega_k$. Then for all $v \in \Omega_k$,
there exists  $\tilde{x}_0^v\in K^v$ such that $\tilde x = (\tilde x_i^v)$ is a local point of $X_c$. }
\end{lemma}

\medskip
\noindent
{\bf Proof.}
Let $\tilde{y}_i^v= N_{K_i^v/k_v}(\tilde{x}_i^v)$ and $y_i^v= N_{K_i^v/k_v}(x_i^v)$.
Since  $\underset{i \in \cI}{{\sum}}b_i^v=\underset{i \in \cI}{{\sum}}\tilde {b}_i^v$, the algebras $(K^v,\underset{i \in \cI}{{\prod}}y_i^v)$
and  $(K^v,\underset{i \in \cI}{{\prod}}\tilde y_i^v)$ are isomorphic, hence
there exists some $z\in K^v$ such that
$(\underset{i \in \cI}{{\prod}}y_i^v)(\underset{i \in \cI}{{\prod}}\tilde{y}_i^v)^{-1}= N_{K^v/k_v}(z)$.
Therefore $(x_0^v z,\tilde{x}_1^v,...,\tilde{x}_m^v)$ is a $k_v$-point of $ X_c$.

\medskip
\noindent
\begin{lemma}\label{global_point_cond}
{\it Let $x = (x_i^v)$ be a local point of $X_c$, and set $b_i^v= b_i^v(x) = {\rm inv}(K^v, N_{K_i^v/k_v}(x_i^v))$.
Suppose that for all $i \in \cI$, we have  $\underset{v\in\Omega_k}{\sum} b_i^v=0$.
Then $ X_c$ has a $k$-point.}
\end{lemma}
\noindent
{\bf Proof.}
By the Brauer-Hasse-Noether Theorem, for every $i \in \cI$ there exists  a central simple algebra $A_i$ over $k$ such that ${\rm inv} (A_i ) = b_i^v$ for all $v \in \Omega_k$.
Set $y_i^v= N_{K_i^v/k_v}(x_i^v)$.
Since $(K^v,y_i^v)$ splits over $K^v$ for all $v$, the algebra $A_i$ also splits over $K$.
 Hence there exists  $\tilde{y}_i\in k$ such that $A_i$ is Brauer equivalent to $(K,\tilde{y}_i)$ (see \cite{GS} Cor. 4.7.6).
 Since  $(K,\underset{i \in \cI}{{\prod}}\tilde{y}_i)_v \simeq (K^v,\underset{i \in \cI}{{\prod}}y_i^v) \simeq (K,c)_v$, the Brauer-Hasse-Noether Theorem implies that
 $(K,\underset{i \in \cI}{{\prod}}\tilde{y}_i)  \simeq (K,c)$, and hence
 $\underset{i \in \cI}{{\prod}}\tilde{y}_i=c N_{K/k}(w)$ for some $w\in K^{\times}$.
 Moreover, we claim that the element $\tilde{y}_i$  belongs to the  group $ N_{K/k}(K^{\times}) N_{K_i/k}(K_i^{\times})$. To see this, we note that $$(K,\tilde{y}_i)_v=(K,y_i^v)=(K, N_{K_i^v/k_v}(x_i^v)).$$ Hence we have $\tilde y_i\in N_{K/k}(J_K) N_{K_{i}/k}(J_i)$  where $J_i$ is the id\`ele group of $K_i$, for all $i \in \cI$, and $J_K$ is the id\`ele group of $K$.
By Proposition \ref{hurlimann},  we have $\tilde{y}_i= N_{K/k}(w_i) N_{K_{i}/k}(z_i)$ for some $w_i\in K^{\times}$ and $z_i\in K_i^{\times}$.
Therefore $\underset{i \in \cI}{{\prod}}\tilde{y}_i=\underset{i \in \cI}{{\prod}} N_{K/k}(w_i) N_{K_i/k}(z_i)=c N_{K/k}(w)$
and  $(w^{-1}\underset{i \in \cI}{{\prod}}w_i,z_1,...,z_m)$ is a $k$-point of $ X_c$. This completes the proof of the lemma.

\medskip

\subsection{Brauer-Manin map - the prime power degree case}

Now suppose that $K$ is a cyclic extension of degree $d=p^e$, where $p$ is a prime.
Let $x=(x_i^v)\in\underset{v\in\Omega_k}{\prod} X_c(k_v)$ be a local point of $X_c$.
Let $M_i$ be a cyclic extension of $K_i$ such that the algebra $E_i$ is isomorphic to a product of copies of $M_i$; then the
degree of $M_i$ is $p^{e_i}$ for some $0\leq e_i \leq e$. Without loss of generality, we assume that $(e_1,...,e_m)$ is a decreasing sequence.
Let us define
\[
\alpha_c:\sha(K,K')  \to \rat/\ent
\]
by $\alpha_c(a_1,...,a_m)=\underset{v\in\Omega_k}{\sum}\underset{i \in \cI}{{\sum}} a_ib_i^v(x)$, where $(a_1,...,a_m)\in G \subseteq\underset{i \in \cI}{{\oplus}}\ent/p^{e_i}\ent$. Note that by Lemma \ref{split-inv}, we have
$b_i^v(x) \in\frac{1}{p^{e_i}}\ent/\ent$. Hence $a_ib_i^v(x)$ is well-defined. Moreover, by Lemma \ref{well_def}, the map $\alpha_c$ vanishes on the subgroup $D$ of $G$; hence, the map $\alpha_c : \sha (K,K') \to \rat / \ent$ is well-defined.

\medskip
Note that the map $\alpha_c$ is an analogue of the map given by the Brauer-Manin
pairing. In the following we show that $\alpha_c$ has a classical property of the Brauer-Manin pairing.

\medskip
\noindent
\begin{prop}\label{indep_pt}
{\it The map $\alpha_c :\sha(K,K')\ra \rat/\ent$  is independent of the choice of the local point $x = (x^v_i)$}.
\end{prop}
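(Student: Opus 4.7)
The plan is to show directly that $\alpha_x(a)=\alpha_{x'}(a)$ by computing the local contribution at each place. Given two families $x=(x_i^v)$ and $x'=(x_i'^v)$ of local points of $X_c$, set $z_i^v=x_i^v(x_i'^v)^{-1}$. Then $(z^v)_v$ is an adelic point of $T_0$ in the sense that $\prod_{i=0}^m N_{L_i^v/k_v}(z_i^v)=1$ at every $v$, and the difference to control is
\[
\alpha_x(a)-\alpha_{x'}(a)=\sum_{v\in\Omega_k}\sum_{i=1}^m a_i\,\inv\bigl((L_0^v,N_{L_i^v/k_v}(z_i^v))\bigr).
\]
By Proposition \ref{sha_isom}, one may reduce to the case $[L_0:k]=p^e$.

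Fix a place $v$ and, for each $i$ and place $w$ of $L_i$ above $v$, let $p^{d_{i,w}}$ be the degree over $L_i^w$ of the cyclic extension determined by $\tilde{\chi}|_{L_i^w}$. From the proof of Theorem \ref{sha_T0_primepower}, the hypothesis $a\in S$ provides, for each $v$, an element $n_v\in\rH^1(k_v,\hat{\rS}_{L_0^v/k_v})\simeq\ent/p^{e_v}\ent$ such that $\pi_{d_i,d_{i,w}}(a_i)=\pi_{e_v,d_{i,w}}(n_v)$ for every $i$ and every $w\mid v$. The projection formula expresses $\inv((L_0^v,N_{L_i^v/k_v}(z_i^v)))$ as the sum over $w\mid v$ of the invariants of local cyclic algebras over $L_i^w$ of order dividing $p^{d_{i,w}}$, so the above congruences let one replace $a_i$ by $n_v$ in each such term; re-assembling via the projection formula yields
\[
a_i\,\inv\bigl((L_0^v,N_{L_i^v/k_v}(z_i^v))\bigr)=n_v\,\inv\bigl((L_0^v,N_{L_i^v/k_v}(z_i^v))\bigr).
\]
Summing over $i$, factoring out $n_v$, and using $\prod_{i=0}^m N_{L_i^v/k_v}(z_i^v)=1$ to replace $\prod_{i=1}^m N_{L_i^v/k_v}(z_i^v)$ by $N_{L_0^v/k_v}(z_0^v)^{-1}$, the $v$-contribution collapses to $-n_v\inv((L_0^v,N_{L_0^v/k_v}(z_0^v)))$, which vanishes because $N_{L_0^v/k_v}(z_0^v)$ is a norm from the cyclic extension that splits $(L_0^v,\cdot)$. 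Hence the $v$-contribution is zero, and so is the total sum.

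The main technical obstacle will be the bookkeeping when $L_0\otimes_k k_v$ or $L_i\otimes_k k_v$ is not a field. One must verify carefully that the description of the local subgroup $D_v$ from the proof of Theorem \ref{sha_T0_primepower}, the decomposition of $\rH^1(k_v,\rI_{L_i/k}(\hat{\rS}_{E_i/L_i}))$ as a sum over places $w\mid v$, and the iterated applications of the projection formula are all mutually compatible. Once these local identifications are in place, the computation above is essentially two applications of corestriction bracketing the key congruence $a\equiv n_v$ supplied by the condition $a\in S$.
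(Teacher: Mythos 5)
Your proof is correct and takes essentially the same route as the paper: both rest on the observation that the membership $a\in S$ supplies, at each place $v$, a single scalar $n(v)$ with $a_i\,b_i^v=n(v)\,b_i^v$ for every $i$ (via Lemma \ref{split-inv} and the splitting-distance bound), which makes the local contribution $\sum_i a_i b_i^v$ depend only on $n(v)$ and $(L_0,c)_v$. The paper states the conclusion directly as $\sum_i a_i b_i^v=n(v)(L_0,c)_v$, while you repackage it as the vanishing of the difference $\alpha_x-\alpha_{x'}$ by introducing $z_i^v=x_i^v(x_i'^v)^{-1}\in T_0(k_v)$; this is an equivalent formulation and introduces no new ideas.
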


\medskip
\noindent
{\bf Proof.} We use the notation of section 5.1.
Let $a\in G$ and $I(a)=(I_0,....,I_{p^{e_1}-1})$.
If $a\in D$, then by Lemma \ref{well_def}, we have $\alpha_c(a)=0$.
In the following, we assume that $a\notin D$.

\medskip
 By the definition of $G$, we have  $ \Omega (I_0)\cup...\cup \Omega (I_{p^{e_1}-1})=\Omega_k$.
 Given a place $v\in\Omega_k$, there exists  $n(v)\in\ent/p^{e_1}\ent$ such that
 $v\in \Omega(I_{n(v)})$.
 Set $\delta_i = \delta(n(v),a_i)$ and let $K_i^v=\underset{w|v}{\prod} K_i^w$, where $K_i^w$ are field extensions of $k_v$.
 Then for all $i\notin I_{n(v)}$, the algebra $E_i^v$ is isomorphic to a products of field extensions of $K_i^w$ of degree at most $p^{\delta_i}$.
 Set $b_i^v = b_i^v(x)$;
 by Lemma \ref{split-inv}, we have $b_i^v\in\frac{1}{p^{\delta_i}}\ent/\ent$.
 By the definition of $\delta_i$, we have $\pi_{e_i,\delta_i}(a_i)=\pi_{e_1,\delta_i}(n(v))$. Hence for $i\notin I_{n(v)}$, we have
 \[a_ib_i^v=\pi_{e_i,\delta_i}(a_i)b_i^v=\pi_{e_1,\delta_i}(n(v))b_i^v=n(v)b_i^v.\]
Hence for all $v\in\Omega_k$, we have
 \begin{center}
 $\underset{i\in\cI}{\sum} a_i b_i^v=n(v)\underset{i\in \cI}{\sum} b_i^v=n(v){\rm inv}(K,c)_v$,
 \end{center}
 which is again independent of the $x^v_i$'s. Therefore, the map $\alpha_c$ is independent of the choice of the local point, and the proposition is proved.

 \medskip
 The map $\alpha_c:\sha(K,K')\ra \rat/\ent$  will be called  the \emph{Brauer-Manin map} for $X_c$.

 \medskip
Let $K_0$ be the unique subfield of $K$ such that $[K_0:k] = p^{e-1}$, and
set $L_0 = K_0 \times K'$. If $c \in k^{\times}$, let $X^0_c$ be the affine $k$-variety determined by $N_{L_0/k}(t) = c$. There is a natural map $\rho:X_c\to X^0_{c}$ defined as $\rho(x_0,...,x_m)=(N_{K/K_0}(x_0),x_1,...,x_m)$.

If $X^0_c(k_v) \not = \emptyset$
for all $v \in \Omega_k$, we denote by $\alpha^0_c : \sha(K_0,K') \to \rat/\ent$ the corresponding Brauer-Manin map.

\medskip
If $t_i \in K^v_i$,  set

\medskip

\centerline {$b_i^v(K,t_i) = {\rm inv}(K^v,N_{K_i^v/k_v}(t_i))$, and
$b_i^v(K_0,t_i) = {\rm inv}(K_0^v,N_{K_i^v/k_v}(t_i))$.}

\medskip
Recall that  a local point of $X_c$ is $x = (x_i^v) \in\underset{v\in\Omega_k}{\prod}   X_c(k_v)$ such that  for each $i \in \cI$, we have $b_i^v(K,x_i^v)  = 0$ for almost all $v \in \Omega_k$.

\medskip
The following lemma is an analogue of the functoriality of the Brauer-Manin pairing.
\begin{lemma}\label{alpha0}
Assume that $X_c(k_v) \not = \emptyset$ for all $v \in \Omega_k$. Then we have

\medskip {\rm (i}) $X_c^0(k_v) \not = \emptyset$ for all $v \in \Omega_k$.

\medskip {\rm (ii}) $\alpha_c \circ F = \alpha_c^0$.

\end{lemma}

\medskip
\noindent
{\bf Proof.} If $x^v \in X_c(k_v)$, then $N_{L^v/L_0^v}(x^v) \in X_c^0(k_v)$. This proves  {\rm (i}). Let us check  {\rm (ii}). Let $x = (x_i^v)$ be a local point of $X_c$.
Note that
$b^v_i (K_0,x^v_i) = p b^v_i (K,x^v_i) $. Let $a \in \sha(K_0,K')$. Then we have $$\alpha_c(F(a)) = \underset {v \in \Omega_k}\sum \underset {i \in \cI} \sum a_i (p b^v_i (K,x^v_i) ) =  \underset {v \in \Omega_k}\sum \underset {i \in \cI} \sum a_i b^v_i (K_0,x^v_i) = \alpha_c^0(a).
$$ This completes the proof of the lemma.


\medskip

\subsection{Brauer-Manin map - the general case}

Recall that $K/k$ is a cyclic extension of degree $d$, and that $L = K \times  K'$, where $K'$ is an \'etale $k$-algebra.
We keep the notation of 5.3, in particular, $\cP$ is the set of prime divisors of $d$.  For all $p \in \cP$, we denote by $K(p)$ the
largest subfield of $K$ of degree a power of $p$, and we set $L(p) = K(p) \times K'$.
For all $c\in k^{\times}$ and $p \in \cP$,  we let $X_c(p)$ be the $T_{L(p)/k}$-torsor defined by
\[
N_{L(p)/k}(x) =c.
\]
 Let $x=(x_i^v)\in\underset{v\in\Omega_k}{\prod} X_c(k_v)$ be a local point of $X_c$, and
let us write $x = (x^v_0,x'^v)$ with $x^v_0 \in K^v$ and $x'^v \in K'^v$. Then $(N_{K^v/K(p)^v}(x^v_0),x'^v )$ is a local point of $X_c(p)$.

\medskip
Let $\alpha (p)$  be the Brauer-Manin map of $X_c (p)$, as defined above. By Proposition \ref{indep_pt} the map $\alpha (p)$ is independent of the
choice of the local point. Recall that $\sha (K,K') =  \underset{p \in \cP}{\oplus}  \sha(K(p),K')$, and let us define $\alpha_c : \sha(K,K') \to \rat/\ent$
by $\alpha_c = \underset{p \in \cP} \oplus \alpha_c(p)$. Hence $\alpha_c$ is also independent of the choice of the local point.
We call $\alpha_c$ the \emph{Brauer-Manin map} for $X_c$.

\section{Necessary and sufficient condition}

\medskip We keep the notation of the previous sections.
The main theorem is the following:
\begin{theo} \label{main_theo}
{\it
The affine $k$-variety $ X_c$ has a $k$-point if and only if
$ X_c$ has a $k_v$-point at each place $v\in\Omega_k$ and $\alpha_c$ is the zero map.}
\end{theo}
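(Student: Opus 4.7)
The plan is to reduce to the prime power case via Proposition~\ref{sha_isom}, which splits $\sha^2(k,\hat T_0)\simeq\bigoplus_j\sha^2(k,\hat T_{0,j})$ compatibly with the decomposition $\alpha=\sum_j\alpha_j$ built into the definition of the Brauer-Manin map; so it suffices to treat the case $q=p^e$. In that case the necessity is the short direction: if $X_c$ has a global point $x=(x_0,\dots,x_m)$, choose $x^v=x$ for every $v$. Then each $y_i^v=N_{L_i/k}(x_i)$ lies in $k^\times$, so $(L_0,y_i)\in\Br(k)$ is a global class and the Brauer-Hasse-Noether theorem gives $\sum_v b_i^v=\sum_v\inv_v((L_0,y_i))=0$ for each $i$. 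Exchanging the order of summation, $\alpha(a)=\sum_i a_i\bigl(\sum_v b_i^v\bigr)=0$ for every $a\in\sha(L_0,E)$.

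For the converse I would rely on the theorem of Sansuc recalled in the introduction: since $X_c$ is a torsor under the torus $T_0$, the Brauer-Manin obstruction is the only obstruction to the Hasse principle, so given local points everywhere, $X_c$ has a $k$-point if and only if $\eta(X_c)\in\sha^1(k,T_0)$ vanishes. By Poitou-Tate duality combined with Lemma~\ref{T0 and T1} and Theorem~\ref{sha_T0_primepower}, $\sha^1(k,T_0)$ is $\rat/\ent$-dual to $\sha(L_0,E)$, so $\eta(X_c)=0$ iff the Poitou-Tate pairing $\langle\eta(X_c),\,\cdot\,\rangle$ vanishes on all of $\sha(L_0,E)$. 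The key step is to identify this pairing with the map $\alpha$: for $a\in\sha(L_0,E)$, lifted through $\rho^1$ in (\ref{e20}) to $(a_1,\dots,a_m)\in\bigoplus_i\rH^1(k,\rI_{L_i/k}(\hat\rS_{E_i/L_i}))\simeq\bigoplus_i\ent/p^{d_i}\ent$, the evaluation of $\eta(X_c)$ against $a$ at the place $v$ should come out to the local cup product $\sum_i a_i\cup x_i^v\in\Br(k_v)$, which via the projection formula and Lemma~\ref{split-inv} equals $\sum_i a_i b_i^v$.

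The main obstacle is exactly this last identification of the Poitou-Tate pairing with the explicit formula for $\alpha$. It requires carefully chasing the duality diagram for the extension (\ref{e4}) together with the isomorphisms $\rH^1(L_i,\hat\rS_{M_i/L_i})\simeq\ent/p^{d_i}\ent$ supplied by Lemma~\ref{cyclic_sha}, showing that the local cup product $a_i\cup x_i^w$ on each factor is computed by the cyclic algebra $(M_i^w,x_i^w)\in\Br(L_i^w)$, and then corestricting to $k$ to recover $(L_0^v,y_i^v)$. Proposition~\ref{indep_pt} already guarantees that the resulting pairing is well defined on $\sha(L_0,E)$ and independent of the chosen local points, so once the pairing equality is in place the theorem follows at once: $\alpha\equiv 0$ is equivalent to $\eta(X_c)$ pairing trivially with all of $\sha^2(k,\hat T_0)$, hence to $\eta(X_c)=0$, hence by Sansuc to the existence of a $k$-point.
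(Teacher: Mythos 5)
Your ``only if'' direction is fine, and your overall strategy for the converse --- identify $\alpha$ with the Poitou--Tate/Brauer--Manin pairing $\langle\eta(X_c),\,\cdot\,\rangle$ on $\sha^1(k,T_0)\times\sha^2(k,\hat T_0)$ and conclude from perfectness of that pairing --- is a legitimate route and a genuinely different one from the paper's. The paper never performs this identification inside the proof of the theorem: it argues constructively, reducing to prime power degree by a Bezout argument (Lemma \ref{coprim_pt}), then inducting on the exponent $e$ via the subfield $L_{0,-1}$ and Lemma \ref{compatible_obstruct}, and in the base case $e=1$ explicitly \emph{modifying} the local points (Steps 1 and 2, using Lemmas \ref{modify_condition}, \ref{global_point_cond} and \ref{local_modify} together with a graph-connectivity argument) until $\sum_v b_i^v=0$ for every $i$ separately, at which point Lemma \ref{global_point_cond} manufactures a global point from the Brauer--Hasse--Noether theorem and Proposition \ref{hurlimann}. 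The fact that $\alpha$ computes the Brauer--Manin pairing is then a \emph{consequence} of the theorem together with Sansuc's result, not an ingredient of its proof; your proposal inverts this logical order.

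The problem is that the step you yourself call ``the main obstacle'' is the entire mathematical content of the theorem in your framing, and you have only described what would need to be checked rather than checking it. Concretely: $a=(a_1,\dots,a_m)$ lives in $\bigoplus_i\rH^1(k,\rI_{L_i/k}(\hat\rS_{E_i/L_i}))$, i.e.\ in the cohomology of $\hat T_1$ after applying $\rho^1$, whereas the class you must pair $\eta(X_c)$ against lives in $\rH^2(k,\hat T_0)$; passing between them uses the connecting map of the dual of the sequence $1\to T_1\to\rR_{E/k}(\gm_m)\to T_0\to 1$, and you must verify that this connecting map is compatible with the local cup products, that the resulting local invariant on the $i$-th factor is the class of the cyclic algebra $(M_i^w,x_i^w)$ over $L_i^w$, and that corestriction returns exactly $(L_0^v,y_i^v)$ with the correct sign. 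None of this is carried out, and there is no shortcut around it: without the identity $\langle\eta(X_c),a\rangle=\sum_v\sum_i a_ib_i^v$ your argument proves nothing beyond the trivial direction. (Note also that Proposition \ref{indep_pt} only shows $\alpha$ is well defined; it does not help you relate $\alpha$ to the duality pairing.) As it stands the proposal is a plausible plan, not a proof; to complete it you would need to supply the full diagram chase, which is comparable in length and delicacy to the paper's Section 5.
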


\subsection{The prime power degree case}
We suppose that $K$ is cyclic of degree $p^e$, where $p$ is a prime number and $e\geq 1$. The proof of Theorem \ref{main_theo} uses induction on $e$.

\medskip
The proof can be divided into three parts.
Recall that $\rho : X_c \to X_c^0$ is the map given by $(x_0,x_1,\dots,x_m) \mapsto ({\rm N}_{K/K_0}(x_0),x_1,\dots,x_m).$ (See \S 6.2.)
Suppose that $X_c$ has a $k_v$-point for all $v\in\Omega_k$ and that $X_c^0$ has a $k$-point $z=(z_0,...,z_m)$.
In the first part we use the point $z$ to get a system of local solutions $(\tilde{x}_i^v)$ of $X_c$ such that
$\underset  {v \in \Omega_k} \sum b_i^v(K,\tilde x_i^v) \in {1 \over p}\ent /\ent$. (See Lemma \ref{times p}-Lemma \ref{complement}.)

In Lemma \ref{alpha bar}-Lemma \ref{step 3}, we show that one can further modify $(\tilde{x}_i^v)$ so that
$\underset  {v \in \Omega_k} \sum b_i^v(K,\tilde x_i^v)=0$.
In the end we conclude  our main theorem by induction on $e$ and Lemma \ref{global_point_cond}.

\medskip
We would like to mention that an alternative way to prove the theorem is to find the generators of
the unramified Brauer group of $X_c$ and show that the Brauer-Manin map defined here is the evaluation on the unramified Brauer group. However, here we choose to prove the theorem in a more elementary way and keep the unramified Brauer group for our future work.

\medskip
We start with some preliminary results.

\medskip
Recall that $E_i = K \otimes_k K_i$.

\begin{lemma}\label{local_modify}
Suppose that $K$ is a cyclic extension of degree $p^e$, where $p$ is a prime and $e\geq 1$.
Let $v\in\Omega_k$ and $i\in \cI$ be such that $E_i^v$ is not isomorphic to
a product of copies of $K^v_i$. Then for all  $b\in\frac{1}{p}\ent/\ent\subseteq\rat/\ent$, there exists $x\in K^v_i$ such that
${\rm inv}(K^v,N_{K^v_i/k_v}(x))=b$.
\end{lemma}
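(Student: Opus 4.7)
The plan is to localize the argument to a single place $w$ of $L_i$ above $v$ at which $L_0$ does not split, and then combine the projection formula with local class field theory. Write $L_i^v = \prod_{w|v} L_i^w$; the hypothesis that $E_i^v$ is not isomorphic to a product of copies of $L_i^v$ means that for at least one place $w|v$ the algebra $L_0\otimes_k L_i^w$ fails to be a product of copies of $L_i^w$. Since $L_0/k$ is cyclic of $p$-power degree, this algebra must be a product of copies of a single cyclic extension $M_i^w := L_0\cdot L_i^w$ of $L_i^w$, of degree $p^d$ with $d\geq 1$.

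Fix such a $w$, and take $x = (1,\dots,1,x_w,1,\dots,1)\in L_i^v$ concentrated in the $w$-coordinate, so that $N_{L_i^v/k_v}(x) = N_{L_i^w/k_v}(x_w)$. The restriction $\tilde\chi|_{L_i^w}$ is precisely the character cutting out $M_i^w$ over $L_i^w$, so by the projection formula (\cite{GS} Prop.~3.4.10) one obtains
\[
(L_0^v,\, N_{L_i^w/k_v}(x_w)) \;=\; \cor_{L_i^w/k_v}(M_i^w,\, x_w) \qquad \text{in } \Br(k_v).
\]
Since corestriction between Brauer groups of local fields preserves invariants, realizing $b$ as $\inv(L_0^v, N_{L_i^v/k_v}(x))$ reduces to finding $x_w\in L_i^{w,\times}$ with $\inv_{L_i^w}(M_i^w,x_w)=b$.

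At this point the conclusion follows at once from local class field theory: the image of the map $y\mapsto (M_i^w,y)$ from $L_i^{w,\times}$ to $\Br(L_i^w)$ equals the relative Brauer group $\Br(M_i^w/L_i^w)$, which via the reciprocity isomorphism $L_i^{w,\times}/N_{M_i^w/L_i^w}(M_i^{w,\times})\cong\Gal(M_i^w/L_i^w)$ is identified with $\tfrac{1}{p^d}\ent/\ent$ inside $\rat/\ent$. Since $d\geq 1$ this image contains $\tfrac{1}{p}\ent/\ent$, so an $x_w$ realizing $b$ exists. The archimedean case $k_v=\re$, $L_i^w=\re$, $M_i^w=\cpx$ forces $p=2$ and $d=1$ and is covered by the same argument. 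There is no genuine obstacle here; the only care needed is the identification of $\tilde\chi|_{L_i^w}$ with the cyclic character defining $M_i^w/L_i^w$, which is automatic from $M_i^w = L_0\cdot L_i^w$.
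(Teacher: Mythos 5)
Your argument is correct and follows essentially the same route as the paper's: isolate one place $w$ of $L_i$ above $v$ at which $L_0 \cdot L_i^w$ is a nontrivial cyclic extension (the paper instead phrases this as reducing to the case where $L_0^v$ and $L_i^v$ are fields, but the effect is identical), then apply the projection formula together with the fact that corestriction for local fields is an isomorphism on Brauer groups preserving invariants, so that the image of $y\mapsto (L_0^v, N_{L_i^w/k_v}(y))$ is exactly the relative Brauer group $\Br(M_i^w/L_i^w)\simeq \tfrac{1}{p^d}\ent/\ent \supseteq \tfrac{1}{p}\ent/\ent$. Your remark verifying that $\tilde\chi|_{L_i^w}$ is the character cutting out $M_i^w$ over $L_i^w$ is exactly the point the paper uses implicitly when invoking the projection formula, so nothing is missing.
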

\begin{proof}
Suppose that $K^v$ is isomorphic to a product of copies of a field extension $M$ of $k_v$, and set
$[M:k_v] = p^ f$. Since by hypothesis $E_i^v$ is not isomorphic to a product of copies of $K^v_i$, we have $f \geq 1$. Assume that  $K_i^v \simeq \underset {j \in J} \prod M_{i,j}$,
where $M_{i,j}$ is a field extension of $k_v$ for all $j \in J$.

\medskip It suffices to prove that  $\frac{1}{p}\ent/\ent\subseteq\inv (M,N_{M_{i,j}/k_v}(M_{i,j}^{\times}))$ for some $j \in J$; hence we may assume
that $K^v$ is a field extension of $k_v$ of degree $p^e$ with $e\geq 1$,  and that $K_i^v$ is a field.

\medskip
Let  $\Br(K^v/ k_v)$ be the subgroup of the Brauer group of $k_v$ split by $K^v$; this
group is isomorphic to $\ent/p^e\ent \simeq k_v^\times/N((K^v)^\times$). (See \cite{GS} Cor. 4.4.10 and \cite{CF} Chap. VI \S1.1 Thm. 3 Cor. 2.)

\medskip
For all $i \in \cI$, let $M_i$ be a field such that $E_i^v = K \otimes_k K^v_i$ is a product of copies of  $M_i$, and set $[M_i :K^v_i] = p^{e^v_i}$; the hypothesis implies that
$e^v_i \ge 1$.
The corestriction map  $\Br(K^v_i) \to \Br(k_v)$ is an injection and restricts
to an injection of $\Br(M_i/K^v_i)$ into $\Br(K^v/ k_v)$, the image being the
unique subgroup of order $p^{e^v_i}$ of the cyclic group of order $p^e$.
By the projection formula (\cite{GS} Prop. 3.4.10), the image consists of cyclic algebras of the type $(K^v, N_{K^v_i/k_v}(z))$ with
$z$ an element of $K^v_i$.  Hence $\frac{1}{p}\ent/\ent\subseteq\frac{1}{p^{e^v_i}}\ent/\ent=\inv(K^v, N_{K^v_i/k_v}(K^v_i)^\times)$.
This completes the proof of the lemma.
\end{proof}


\medskip
Let $K_0$ be the unique subfield of $K$ such that $[K_0:k] = p^{e-1}$.

\medskip
Recall that we have $K' =  \underset{i \in \cI} \prod K_i$, that $E_i = K \otimes K_i$, and that $E_i$ is the product of copies of a cyclic extension of
degree $p^e_i$ of $K_i$. Set $E_i^0 = K_0 \otimes K_i$. Then $E_i^0$ also splits as a product of copies of a cyclic extension of $K_i$; let us denote by $p^f_i$
the degree of this extension. Recall that for all $i \in \cI$, we have $f_i \le e_i$. If moreover $e_i \not = 0$, then $e_i = f_i + 1$ (cf. lemma \ref{e and f}).
For all $i \in \cI$, the map $F_i : \ent/p^{f_i} \ent \to \ent/ p^{e_i} \ent$ is the inclusion
of the unique subgroup of order $p^{f_i}$ in the group  $\ent/ p^{e_i} \ent$, and we set $F = \underset{i \in \cI} \oplus F_i$.

\medskip
The map $F : \underset {i \in \cI} \oplus \ent/p^{f_i} \ent \to  \underset {i \in \cI} \oplus \ent/ p^{e_i} \ent$  induces a
homomorphism $F : \sha(K_0,K') \to \sha (K,K')$.
Recall that the cokernel of $F$ is
isomorphic to the group $\sha(K/K_0,K')$, defined in section 5.

\medskip
For all $i \in \cI$, set $r_i = {\rm min} \{1,e_i \}$. For all  $c  \in \underset {i \in \cI} \oplus \ent/p^{r_i}\ent$ and $n \in \ent / p \ent$, set $I^1_n(c) = \{ i \in \cI \ \ | n \succeq  c \}$.
If $I^1_n(c) \not = \cI$, set $\Omega (I^1_n(c)) = \underset{i\notin I^1_n(c)}{\cap}{\Sigma}_i$; if $I^n(c) = \cI$, set $\Omega (I^1_n(c)) = \Omega_k$. Set

\[G(K/K_0,K') =\{c \in \underset {i \in \cI} \oplus \ent/p^{r_i}\ent  \ \ |    \ \underset{n\in\ent/p^{r_1}\ent}{\bigcup} \Omega (I_n(c))=\Omega_k\},
\]
let $D(K/K_0,K')$ be the diagonal subgroup of $G(K/K_0,K')$, and recall that $$\sha(K/K_0,K') = G(K/K_0,K') / D(K/K_0,K').$$

\medskip
Recall that the projection $\pi : \underset {i \in \cI} \oplus \ent/p^{e_i} \ent \to  \underset {i \in \cI} \oplus \ent/ p^{r_i} \ent$  induces a
homomorphism $F : \sha(K,K') \to \sha (K/K_0,K')$ (cf. lemma \ref{K/K_0})

\medskip

If $t_i \in K^v_i$,  set $b_i^v(K,t_i) = {\rm inv}(K^v,N_{K_i^v/k_v}(t_i))$, and
$b_i^v(K_0,t_i) = {\rm inv}(K_0^v,N_{K_i^v/k_v}(t_i))$.

\medskip
Recall that  a local point of $X_c$ is $x = (x_i^v) \in\underset{v\in\Omega_k}{\prod}   X_c(k_v)$ such that for each $i \in \cI$, we have $b_i^v(K,x_i^v)  = 0$ for almost all $v \in \Omega_k$.

\medskip
\begin{lemma}\label{times p} Let $x = (x_i^v)$ be a local point of $X_c$, and let $z = (z_i)$ be a global point of $X_c^0$. Then for all
$v \in \Omega_k$, we have

$$p \  \underset {i \in \cI}  \sum b^v_i(K,x_i^v) = p \  \underset {i \in \cI}  \sum b_i^v(K,z_i).$$

\end{lemma}

\medskip
\noindent
{\bf Proof.} Since $x$ is a local point of $X_c$, we have $ \underset {i \in \cI}  \sum b_i^v(K,x_i^v) = {\rm inv}(K^v,c)$ for all $v \in \Omega_k$.
Similarly, we have $ \underset {i \in \cI}  \sum b_i^v(K_0,z_i)=  {\rm inv}(K_0^v,c)$ for all $v \in \Omega_k$. Note that ${\rm inv}(K_0^v,c) =
p \ {\rm inv}(K^v,c)$, and ${\rm inv}(K_0^v,z_i) =p \ {\rm inv}(K^v,z_i)$ for all $i \in \cI$. Hence we have
$$p \  \underset {i \in \cI}  \sum b_i^v(K,x_i^v) =  p \ {\rm inv}(K^v,c) = {\rm inv}(K_0^v,c) =  \underset {i \in \cI}  \sum b_i^v(K_0,z_i) = p \  \underset {i \in \cI}  \sum b_i^v(K,z_i),$$
as claimed.

\medskip
\begin{lemma}\label{three parts} Let $x = (x_i^v)$ be a local point of $X_c$, and assume that $X_c^0(k) \not = \emptyset$. Then there
exist  $\tilde x_i^v \in K_i^v$ such that

\medskip
{\rm (i)} For each $i \in \cI$, we have $b_i^v(K,\tilde x_i^v) = 0$ for almost all $v \in \Omega_k$.

\medskip
{\rm (ii)} For all $i \in \cI$, we have $\underset  {v \in \Omega_k} \sum b_i^v(K,\tilde x_i^v) \in {1 \over p}\ent /\ent$.

\medskip
{\rm (iii)} For all
$v \in \Omega_k$, we have

$$ \underset {i \in \cI}  \sum b_i^v(K,x_i^v) =   \underset {i \in \cI} \sum b_i^v(K,\tilde x_i^v).$$

\end{lemma}

\medskip
\noindent
{\bf Proof.} Let  $z = (z_i)$ be a global point of $X_c^0$. Set $b_i^v = b_i^v(K,x_i^v)$ and $h_i^v = b_i^v(K,z_i)$. By Lemma \ref{times p}, we have
$p\underset{i\in\cI}{\sum} b_{i}^v = p\underset{i\in\cI}{\sum} h_{i}^v$.
Since  $b_i^v=0$ and $h_i^v = 0$ for almost all $v \in \Omega_k$, for almost places $v \in \Omega_k$ we have $\underset{i\in\cI}{\sum} h_{i}^v=\underset{i\in\cI}{\sum} b_{i}^v$.
Suppose that there is $v \in \Omega_k$ such that $\underset{i\in\cI}{\sum} h_{i}^v \neq\underset{i\in\cI}{\sum} b_{i}^v$.
If $v\in\underset{i\in\cI}{\cap}\Sigma_i$, then $b_i^v=h_i^v=0$ for all $i\in\cI$, which is a contradiction.
Hence there exists  $i \in \cI$ such that $v\notin\Sigma_i$.
Since $p\underset{j \in \cI}{\sum} b_{j}^v=p\underset{j \in \cI}{\sum} h_{j}^v$ in $\rat/\ent$, we know that $\underset{j\in\cI}{\sum}b_{j}^v-\underset{j\in\cI}{\sum} h_{j}^v\in \frac{1}{p}\ent/\ent$. By Lemma \ref{local_modify}, there exists $\tilde x_i^v \in K_i^v$ such that $\inv(K^v,N_{K_i^v/k_v}({\tilde x}_i^v))=h_i^v-\underset{j\in\cI}{\sum} (h_{j}^v-b_{j}^v)$.

\medskip
Set $\tilde h_i^v = \inv(K^v,N_{K_i^v/k_v}({\tilde x}_i^v))$; for all $j \not = i$, let $\tilde x_j^v = z_j$, $\tilde h_j^v = h_j^v = b_j^v(K,z_j)$. Then we have  $\underset{j\in\cI}{\sum} \tilde h_{j}^v=\underset{j\in\cI}{\sum} b_{j}^v$; this proves {\rm (iii)}.

\medskip
Since $\tilde h_i^v = h_i^v$ for almost all $v \in \Omega_k$, {\rm (i)} holds.
As $z = (z_i)$ is a global point of $X_c^0$, we have
$\underset  {v \in \Omega_k} \sum b_i^v(K_0,z_i) = 0$,
hence $\underset  {v \in \Omega_k} \sum h_i^v  \in {1 \over p}\ent /\ent$; moreover, $h_i^v - \tilde h_i^v \in {1 \over p}\ent /\ent$
for all $i \in \cI$ and all $v \in \Omega_k$. Therefore we have $\underset  {v \in \Omega_k} \sum \tilde h_i^v \in {1 \over p}\ent /\ent$, and this proves
{\rm (ii)}.

\medskip
\begin{lemma}\label{complement} Assume that $X_c(k_v) \not = \emptyset$ for all $v \in \Omega_k$, and that $X_c^0(k)  \not = \emptyset$. Then there
exists a local point $\tilde x = (\tilde x_i^v)$ of $X_c$ such that for all $i \in \cI$, we have $$\underset  {v \in \Omega_k} \sum b_i^v(K,\tilde x_i^v) \in {1 \over p}\ent /\ent.$$

\end{lemma}

\medskip
\noindent
{\bf Proof.} Let $x = (x_i^v)$ be a local point of $X_c$. By Lemma \ref{three parts}, there exist  $\tilde x_i^v \in K_i^v$ such that  $b_i^v(K, \tilde x_i^v) = 0$ for almost all $v \in \Omega_k$, that $ \underset {i \in \cI}  \sum b_i^v(K,x_i^v) =   \underset {i \in \cI} \sum b_i^v(K,\tilde x_i^v)$, and that
for all $i \in \cI$, we have $\underset  {v \in \Omega_k} \sum b_i^v(K,\tilde x_i^v) \in {1 \over p}\ent /\ent$.
By Lemma \ref{modify_condition}, for all $v\in\Omega_k$, there exists $\tilde x^v_0\in (K^v)^{\times}$ such that $(\tilde x^v_0,\tilde x^v_1,...,\tilde x^v_m)\in X_c(k_v).$
This completes the proof of the lemma.

\bigskip
Recall that if $X_c(k_v) \not =\emptyset$ for all $v \in \Omega_k$, and that for all $c \in k^{\times}$, we have a homomorpism $\alpha_c : \sha(K,K') \to \rat/\ent$. We now
show that $\alpha_c$ induces  a homomorphism $\overline \alpha_c : \sha (K/K_0,K') \to \rat /\ent$ such that $\overline \alpha_c \circ \pi = \alpha_c$.

\medskip
\begin{lemma}\label{alpha bar}
Assume that $X_c(k_v) \not =\emptyset$ for all $v \in \Omega_k$
and that $X_c^0(k)  \not = \emptyset$.
Then there exists a
homomorphism $\overline \alpha_c : \sha (K/K_0,K') \to \rat /\ent$ such that $\overline \alpha_c \circ \pi = \alpha_c$.

\end{lemma}

\medskip
\noindent
{\bf Proof.} Let $x = (x_i^v)$ be a local point of $X_c$, and set $b_i^v = \inv(K^v,N_{K_i^v/k_v}(x_i^v))$
for all $i \in \cI$ and all $v \in \Omega_k$.
Let $a=(a_1,...,a_m)\in G$. Then by Lemma \ref{K/K_0} $\pi(a)=(\ol{a}_1,...,\ol{a}_{m})\in G(K/K_0,K')\subseteq\underset {i \in \cI} \oplus \ent/ p^{r_i} \ent$. Note that $r_i=\min\{ 1,e_i\}$ by definition.
If $e_i=0$, then $e_i=r_i=0$ and $a_i=\ol{a}_i=0$. If not, then $r_i=1$ and $\ol{a}_i=a$ (mod $p$).
By lemma \ref{complement}, we may assume that $\underset  {v \in \Omega_k} \sum b_i^v \in {1 \over p}\ent /\ent$ for all $i \in \cI$; hence $a_i(\underset{v\in\Omega_k}{\sum}b^v_i)=\ol{a}_i(\underset{v\in\Omega_k}{\sum}b^v_i)$.
We then have \[\alpha_c(a_1,...,a_m)=\underset{i\in\cI}\sum a_i(\underset{v\in\Omega_k}{\sum}b^v_i)=\underset{i\in\cI}\sum \ol{a}_i(\underset{v\in\Omega_k}{\sum}b^v_i).\]
Hence  $\alpha_c$ induces a homomorphism $\overline \alpha_c : \sha (K/K_0,K') \to \rat /\ent$, as claimed.

\medskip
\begin{lemma}\label{step 1}
Let $x = (x_i^v)$ be a {\it local point} of $X_c$, and set $b^v_i = b^v_i(K,x_i^v)$. Assume that $\alpha_c = 0$, and that $X^0_c(k) \not = \emptyset$. Let $(I_0,\dots,I_{p-1}) \in G(K/K_0,K')$.
Then we have $\underset{i\in I_n}\sum\ \underset{v\in\Omega_k}{\sum}b_i^v=0$ for all  $n\in\ent/p\ent$.
\end{lemma}

{\bf Proof.}
Let $n \in \ent/p\ent$. The statement is trivial if $I_n$ is empty, and it follows from Lemma \ref{well_def} if $I_n = \cI'$. Assume that $I_n$ is not empty, and $I_n \not = \cI'$.  Let
$n' \in \ent/p\ent$ such that $n' \not = n$, and set  $J_n = I_n$, $J_{n'} = \underset {r \not = n} \cup I_r$, and
$J_r = \emptyset$ if $r \not = n,n'$. Then by Lemma \ref{simpl}, we have $(J_0,\dots,J_{p-1}) \in G(K/K_0,K')$. Since $X^0_c(k) \not = \emptyset$, by Lemma \ref{alpha bar}
there exists a
homomorphism $\overline \alpha_c : \sha (K/K_0,K') \to \rat /\ent$ such that $\overline \alpha_c \circ \pi = \alpha_c$.
By hypothesis $\alpha_c$ is the zero map, hence we have $\overline \alpha_c =  0$. Therefore we have $$\underset{r\in\ent/p\ent}{\sum}\ \underset{i\in J_r}\sum\ \underset{v\in\Omega_k}{\sum}rb_i^v=\underset{i\in J_{n}}\sum\ \underset{v\in\Omega_k}{\sum}n b_i^v+\underset{i\in J_{n'}}\sum\ \underset{v\in\Omega_k}{\sum}n' b_i^v=0.$$
By Lemma \ref{well_def} we have  $\underset{i\in\cI'}\sum\ \underset{v\in\Omega_k}{\sum}b_i^v=0$, hence $(n-n')\underset{i\in J_{n}}\sum\ \underset{v\in\Omega_k}{\sum}b_i^v=0$. Recall that $n' \not = n$ by hypothesis, therefore we have
$\underset{i\in J_{n}}{\sum}\underset{v\in\Omega_k}{\sum}b_i^v=0$; since $J_n = I_n$,
we have $\underset{i\in I_{n}}\sum\ \underset{v\in\Omega_k}{\sum}b_i^v=0$, as claimed.

\medskip
\begin{lemma}\label{step 2}
Let $x = (x_i^v)$ be a {\it local point} of $X_c$.  Assume that $\alpha_c = 0$, and that $X^0_c(k) \not = \emptyset$. Let $(I_0,\dots,I_{p-1}) \in G(K/K_0,K')$, and let $n\in\ent/p\ent$. Then there exists a local point $\tilde x = (\tilde x_i^v)$ of $X_c$ such that $\tilde x^v_i = x^v_i$ if $i \not \in I_n$,  and that
$\underset{v\in\Omega_k}{\sum} {b}_i^v (K,\tilde x) =0$ for all $i\in I_n.$

\end{lemma}

\noindent
{\bf Proof.} We prove this by induction on the cardinality of $I_n.$
If $|I_n|=0$ then the claim is trivial; if $|I_n| = 1$, then it follows from  lemma \ref{step 1}, since  we have  $\underset{v\in\Omega_k
}{\sum}b_i^v(x)=0$ for all $i\in I_n.$
Suppose that the claim is true for $|I_n|<h$. For $|I_n|=h$, suppose that there are nonempty disjoint subsets $I^0_n$ and $I^1_n$ of $I_n$ satisfying
$I_n^0\cup I_n^1=I_n$ and $(\underset{i\in I_n^0}{\cap}\Sigma_i)\cup(\underset{i\in I_n^1}{\cap}\Sigma_i)=\Omega_k.$
Then consider the element $( J_0,... J_{p-1})$ where $J_r = I_r$ if $r \not = n, n+1$, $J_n = I_n^0$ and $J_{n+1} = I_n^1 \cup I_{n+1}$.
Note that $\Omega(I_r) = \Omega(J_r)$ if $r \not = n, n+1$ and that  $\Omega (I_{n+1}) \subset \Omega(J_{n+1})$.
Let us prove that $( J_0,... J_{p-1})$ represents an element of $\sha(K,K')$; for this, we have to check that $\Omega(J_0) \cup \dots \cup \Omega(J_{p-1}) =
\Omega_k$. Since $\Omega(I_r) \subset \Omega(J_r)$ if $r \not = n$ and $\Omega(I_0) \cup \dots \cup \Omega(I_{p-1}) = \Omega_k$, it suffices
to check that if $v \in \Omega (I_n)$, then $v \in \Omega(J_0) \cup \dots \cup \Omega(J_{p-1})$. If $v \in \underset {i \in I_n^1} \cap \Sigma_i$,
then we have $v \in \Omega (J_n)$. Otherwise, we have $v\in\underset{i\in I_n^0}{\cap}\Sigma_i$ because  $(\underset{i\in I_n^0}{\cap}\Sigma_i)\cup(\underset{i\in I_n^1}{\cap}\Sigma_i)=\Omega_k.$
Hence we have $v\in(\underset{i\notin I_n\cup I_{n+1}}{\cap}\Sigma_i)\cap(\underset{i\in I_n^0}{\cap}\Sigma_i)=  \Omega (J_{n+1}).$
Therefore $( J_0,... J_{p-1})$ represents an element of $\sha(K,K')$. Since $|J_n|  < h$, we can apply the induction hypothesis, and hence there exists a local point
$\tilde x = (\tilde x_i^v)$ such that $\tilde x^v_i = x^v_i$ if $i \not \in J_n = I_n^1$, and that $\underset{v\in\Omega}{\sum} {b}_i^v (K,\tilde x) =0$ for all $i\in J_n = I^1_n.$ The same argument with $I_n^0$ instead of $I_n^1$ gives the desired result.

\smallskip

Assume now that $I_n$ does not have any non-trivial subpartitions,  in other words, that  there are no nonempty disjoint subsets $I^0_n$ and $I^1_n$ of $I_n$ satisfying
$I_n^0\cup I_n^1=I_n$ and $(\underset{i\in I_n^0}{\cap}\Sigma_i)\cup(\underset{i\in I_n^1}{\cap}\Sigma_i)=\Omega_k.$ Let us consider the
graph with vertex set $I_n$, and edge set $\cE=\{(i,j)|\Sigma_i\cup\Sigma_j\neq\Omega_k\}$; since $I_n$ has no non-trivial subpartitions, this
graph is connected. Set $b_i^v = b(K,x_i^v)_i^v$, and for all $i \in I_n$, set $d_i=\underset{v\in\Omega_k}{\sum} b^v_{i}$. Let us fix an ordering of $I_n$, say $I_n = \{i_0,...,i_t\}$.
Since the graph is connected, there exists a loop-free path between $i_0$ and $i_1$. Along this path,
for any two adjacent vertices $i$, $j$,  there exists $v\in\Omega_k$ such that $v \not \in \Sigma_{i}\cup\Sigma_{j}$. By Lemma \ref{complement} we may
assume that $b_i^v \in {1 \over p} \ent / \ent$ for all $i \in I_n$.
Applying Lemma \ref{local_modify}, by modifying $x_i^v$ and $x_j^v$ we can modify $b^v_{i}$ to  $b^v_{i}-d_{i_0}$ and $b^v_{j}$ to  $b^v_{j}+d_{i_0}$.
Note that this modification does not change $\underset{i\in \cI}{\sum} b^v_{i}.$
Therefore by Lemma \ref{modify_condition}, after changing also $x_0^v$ if necessary, the modified  $(x_i^v)$ is still a local point of $X_c$.
After these modifications, we have $\underset{v\in\Omega_k}{\sum} b^v_{i_0}=0$, $\underset{v\in\Omega_k}{\sum} b^v_{i_1}=d_{i_1}+d_{i_0},$
and all the other $d_i$'s remain unchanged.
We repeat this process along a loop-free path from $i_1$ to $i_2$, and
we modify each adjacent pair along the path from $i_1$ to $i_2$ by $d_{i_0}+d_{i_1}$ and so on.
At the end, we modify each adjacent pair along the path from $i_{t-1}$ to $i_t$ by $\underset{r=0}{\overset{t-1}{\sum}}d_{i_r}$.
After this process, we have $\underset{v\in\Omega_k}{\sum} b^v_{i_r}=0$  for $r=0,...,t-1$ and $\underset{v\in\Omega_k}{\sum} b^v_{i_t}=d_{i_t}+\underset{r=0}{\overset{t-1}{\sum}}d_{i_r}.$ However, by Lemma \ref{step 1}, we know that $\underset{r=0}{\overset{t}{\sum}}d_{i_r}=0$; hence,
we have $\underset{v\in\Omega_k}{\sum} b^v_{i_t}=0.$
Moreover, only finitely many $b_i^v$'s are modified, so $b_i^v=0$ for almost all $v$;
the lemma then follows.

\medskip
\begin{prop}\label{step 3}

Let $x = (x_i^v)$ be a {\it local point} of $X_c$. Assume that $\alpha_c = 0$, and that $X_c^0(k)  \not = \emptyset$.
Then there exists a local point $\tilde x = (\tilde x_i^v)$ of $X_c$ such that
for all $i \in \cI$, we have $$\underset  {v \in \Omega_k} \sum b_i(K,\tilde x_i^v) = 0.$$

\end{prop}

\medskip
\noindent
{\bf Proof.} This follows from Lemma \ref{step 2}.

\bigskip
\noindent
{\bf Proof of Theorem  \ref{main_theo} for $K$ of prime power degree.}

\medskip
It is clear that if $X_c$ has a $k$-point, then $X_c$ has a $k_v$-point for all $v\in\Omega_k$ and $\alpha_c=0$.
Conversely, suppose that $X_c$ has a $k_v$-point for all $v\in\Omega_k$ and that $\alpha_c=0$. Let us show that
the variety $X_c$ has a $k$-point.
We  show our claim by induction on the exponent $e$.  Suppose that
$e=1$. Then $K_0 = k$, and $X_c^0 (k) \not = \emptyset$. By Proposition \ref{step 3},
there exists a local point $x = (x_i^v)$ of $X_c$ such that
for all $i \in \cI$, we have $\underset  {v \in \Omega_k} \sum b_i( x_i^v) = 0.$
Lemma \ref{global_point_cond} implies that $X_c(k) \not = \emptyset$.
Assume now that $e > 1$.
Since $X_c$ has a $k_v$-point for all $v\in\Omega_k$, the variety $X_c^0$ also has a $k_v$-point for all $v\in\Omega_k$.
As $\alpha_c$ is the zero map, by Lemma \ref{alpha0} the Brauer-Manin map $\alpha_c^0$ for $X_c^0$ is also the zero map.
Therefore $X_c^0$ has a $k$-point by induction hypothesis. By Proposition \ref{step 3},  there exists a local point $x = (x_i^v)$ of $X_c$ such that
for all $i \in \cI$, we have $\underset  {v \in \Omega_k} \sum b_i( x_i^v) = 0.$
Lemma \ref{global_point_cond} implies that $X_c(k) \not = \emptyset$.

\bigskip

\subsection{The general case}

\medskip

Recall that $K/k$ is a cyclic extension of degree $d$, and that $L = K \times  K'$, where $K'$ is an arbitrary \'etale $k$-algebra.
We keep the notation of 5.3, in particular, $\cP$ is the set of prime divisors of $d$.  For all $p \in \cP$, we denote by $K(p)$ the
largest subfield of $K$ of order a power of $p$, and  $L(p) = K(p) \times K'$.
For all $c\in k^{\times}$ and $p \in \cP$,   the affine $k$-variety defined by
\[
N_{L(p)/K(p)}(x) =c.
\]
is denoted by $X_c(p)$. Recall that there is a natural map from $X_c$ to $X_c(p)$ (\S 6.3). We denote by $\alpha_c (p)$  be the Brauer-Manin map of $X_c (p)$.  Recall that $\sha (K,K') =  \underset{p \in \cP}{\oplus}  \sha(K(p),K')$, and that  $\alpha_c : \sha(K,K') \to \rat/\ent$ is given
by $\alpha_c = \underset{p \in \cP} \oplus \alpha_c(p)$.

\medskip

\begin{lemma}\label{coprime pt}
Let $c\in k^{\times}$. Then $X_c$ has a $k$-point if and only if $X_c(p)$  has a $k$-point for all $p \in \cP$.
\end{lemma}

\medskip
\noindent
{\bf Proof.} Let $z \in X_c(k)$ be a $k$-point of $X_c$, and
let us write $z = (x,y)$ with $x \in K$ and $y  \in K'$. Then $(N_{K/K(p)}(x),y )$ is a $k$-point of $X_c(p)$ for all $p \in \cP$.
Conversely, suppose that for all $p \in \cP$, the $k$-variety $X_c(p)$ has a $k$-point $(x_{p},y_p)  \in K(p) \times K'$.
For all $p \in \cP$, set  $$r_p = \underset {q \in \cP, q \not = p} \prod [K(q):k],$$
and let $s_p \in \ent$ such that $\underset {p \in \cP} \sum r_p s_p = 1.$ Set $x = \underset {p \in \cP} \prod x_{p}^{s_p}$, and
$y =  \underset {p \in \cP} \prod y_p^{r_p s_p}$. Then $(x,y)$ is a $k$-point of $X_c$.

\medskip
\begin{remark}
The above lemma is compatible with base change to any field extension $l$ of $k$.
\end{remark}

\medskip
\noindent
{\bf Proof of Theorem \ref{main_theo}.}
Suppose that $X_c$ has a $k$-point. Then by lemma \ref{coprime pt}, $X_c(p)$ has a $k$-point for all  $p \in \cP$. This implies that  $\alpha_c (p) = 0$ for
all $p \in \cP$, and hence $\alpha_c=0$.
Conversely, suppose that $X_c$ has a $k_v$-point for all $v \in \Omega_k$ and that $\alpha_c=0$.
Then $X_c(p)$  has a $k_v$-point for all $v \in \Omega_k$.
Since $\alpha_c = 0$, we have $\alpha_c (p) = 0$ for
all $p \in \cP$. But $K(p)$ is a cyclic extension of prime power degree, hence this implies that $X_c(p)$ has a $k$-point for all  $p \in \cP$.
Therefore $X_c$ has a $k$-point by Lemma \ref{coprime pt}.

\begin{coro} \label{idele} Let $I_L$ be the id\`ele group of $L$. Then sending $c \in k^{\times}$ to $\alpha_c$ gives rise to an isomorphism $$(k^{\times} \cap N_{L/k}(I_L)) / N_{L/k}(L^{\times}) \to \sha(L)^*.$$

\end{coro}

\medskip
\noindent
{\bf Proof.} It is clear from the definition of $\alpha_c$ that sending $c \in k^{\times}$ to $\alpha_c$  is a homomorphism; Theorem \ref{main_theo}
implies that this homomorphism is injective.
That it is an isomorphism follows from the fact that $\sha(L)^* \simeq \sha^1(k,T_{L/k})$
(see Corollary \ref {5.10}).
\bigskip

{\bf Metacyclic extensions}

\medskip

In the following we apply the main theorem to the case where $K$ is a metacyclic extension of $k$ (recall that a metacyclic extension is
a Galois extension such that all the Sylow subgroups of its Galois group are cyclic).
As before, let $X_c$ be the $k$-variety defined by the equation $(\ref{e0})$.
Assume that $K/k$ is a metacyclic extension of degree $q=\underset{j=1}{\overset{s}{\prod}} p_j^{e_j}$, where $p_j$'s are distinct primes.
Let $q_j=p_j^{e_j}$ and $r_j=q/q_j$.
For $1\leq j\leq s$, let $G_j$ be a $p_j$-Sylow subgroup of $\Gal(K/k)$ and let ${F}_{j}$ be the subfield of $K$ fixed by $G_j$.
Note that $[F_{j}:k]=r_j$.  Let $X^j_c$ be $X_c\otimes_k F_{j}$.
Then the injection $k\to F_{j}$ induces a natural injection of $X_c(k)$ to $X_c(F_{j})=X_c^j(F_{j})$.

\medskip
Suppose that $X_c$ has a $k_v$-point
for all $v\in\Omega_k$.
Then $X^j_c$ has a $F_{j,w}$-point for all $w\in\Omega_{F_j}$.
Since $F_{j}$ is a cyclic extension of $k$,  we can define the Brauer-Manin map $\alpha_j$ for $X^j_c$.
The necessary and sufficient condition for the Hasse principle for $X_c$ to hold is the following :

\begin{prop}\label{metacyclic}
Assume that $K$ is a metacyclic extension. Then $X_c$ has a $k$-point if and only if $X_c$ has a $k_v$-point for all $v\in\Omega_k$
and $\alpha_j=0$ for $1\leq j\leq s$.
\end{prop}

\begin{proof} Assume that $X_c$ has a $k_v$-point for all $v\in\Omega_k$,
and
that $\alpha_j=0$ for $1\leq j\leq s$. Then  the variety $X^j_c$ has a $F_{j,w}$-point for all $w\in\Omega_{F_j}$.
Since $\alpha_j=0$ for all $1\leq j\leq s$, by Theorem \ref{main_theo} the variety $X_c^j$ has a $F_j$-point.
Let  $(x_{j,i})$ be a $F_j$-point of $X_c^j$, where $x_{j,i}\in (F_j\otimes_k K_i)^\times$.
Let $b_j\in\ent$ such that $\underset{j=1}{\overset{s}\sum} b_jr_j=1$, and set
$z_i=\underset{j=1}{\overset{s}{\prod}} N_{F_j\otimes K_i/K_i}(x_{j,i})^{b_j}$; then $(z_i)$ is a point of $X_c$.
The other direction is trivial.
\end{proof}

\bigskip

\section{Products of cyclic extensions}

\medskip
In this section, we suppose that $L$ is a {\it product of cyclic extensions}, and we denote
by $\sha(L)$ the obstruction group. In the following, we give a simple criterion for the vanishing
of $\sha(L)$; in other words, an easy way to decide whether the Hasse principle holds for $L$.

\medskip
 Assume that $L = \underset{i \in J}\prod K_i$,
 where $K_i/k$ is a cyclic extension of degree $d_i$.
Let $\cP$ be the set of prime numbers dividing $ \underset{i \in J}\prod  d_i$. For all $p \in \cP $ and all $i \in J$, let $K_i(p)$ be the largest subfield
of $K_i$ such that $[K_i(p):k]$ is a power of $p$, and set $L(p) = \underset{i \in J}\prod K_i(p)$.

\medskip
For any cyclic field extension $K/k$ of prime power degree, we denote by $K_{\rm prim}$  the unique subfield of $K$ of degree $p$ over $k$. Set $L(p)_{\rm prim} = \underset{i \in J} \prod K_i(p)_{\rm prim}$.

\medskip
The aim of this section is to prove the following two results :

\smallskip

\begin{theo}\label{zero}Assume that $K_{\rm prim}$ is linearly disjoint from $K_i$ for all $i\in I$. $$\sha(L) = 0 \iff  \underset{p \in \cP(L)}{\oplus}   \sha(L(p)_{\rm prim}) = 0,$$
where $\cP(L)$ is a set of prime numbers, subset of $\cP$.
\end{theo}

\medskip
The set $\cP(L)$ is determined in Theorem \ref{prime}, see below.

\medskip

\begin{theo}\label {cyclicproduct}
$$\sha (L(p)_{\rm prim}) \simeq  (\ent / p \ent)^{m_p(L)},$$
where  $m_p(L)$ is a positive integer.

\end{theo}

The value of $m_p(L)$ is given in Theorem \ref{prime}.

\bigskip We start with the proof of Theorem \ref{cyclicproduct}, which amounts to treating the case where $L$ is a product of cyclic extensions of prime degree.

\begin{theo}\label{prime}  Let $p$ be a prime number, and assume that $L$ is a product of $n$ non-isomorphic cyclic extensions of degree $p$. Then we have

\medskip
{\rm (a)} If $n \le 2$, then $\sha(L) = 0$.

\medskip
{\rm (b)} If $3 \le n \le p+1$, then either $\sha(L) = 0$, or $\sha(L) \simeq  (\ent / p \ent)^{n-2}$.

\medskip
{\rm (c)} If $n \ge p+2$, then $\sha(L) = 0$.

\end{theo}

\medskip
Note that Theorem \ref{prime} implies immediately Theorem \ref{cyclicproduct}, and gives
the value of the integer $m_p(L)$. Note also that the case $n = 1$ is a special case Hasse's cyclic norm theorem, the case $n = 2$ follows from H\"urlimann's result \cite{H} Prop. 3.3, (see also Proposition \ref{hurlimann}), and that the case $n = 3$, $p = 3$ is a result
of Colliot-Th\'el\`ene, cf. \cite{CT}, Th\'eor\`eme 4.1.

\medskip

In order to prove Theorem \ref{prime}, we need to come back to the definition of $\sha(L) = \sha(K,K')$ in the case where $K$ is cyclic
of prime degree, and give a description of this group in terms of partitions.

\medskip
We keep the notation of 5.1, with $e = 1$. In particular, $p$ is a prime number, and $L = K \times K'$, where $K$ is a cyclic extension of $k$ of degree $p$.
Recall that $E_i = K \otimes K_i$, and note that $E_i$ is either a cyclic field extension of $K_i$ or a product of $p$ copies of $K_i$. Let $J$ be
the subset of $i \in \cI$ such that $E_i/K_i$ is a field extension, and let $r = |J|$.

\medskip
Recall that $\Sigma_i$ is the set of $v \in \Omega_k$
such that $E_i^v$ is the product of $p$ copies of $K_i^v$. For all $J' \subset J$ with $J' \not = J$, set
$\Omega(J') = \underset{i \not \in J'} \cap \Sigma_i$, and let $\Omega(J) = \Omega_k$.  By lemma \ref{primedegreepartition}, the group
$G(K,K')$ is in bijection with the set of partitions $(J_0,\dots,J_{p-1})$ of $J$ such that $ \underset{n \in \ent / p \ent} \cup \Omega(J_n) = \Omega_k.$
We identify $G(K,K')$ with the set of these partitions. Note that under this identification, $D(K,K')$ corresponds to the partitions
where one of the subsets is $J$, and all the others are empty; these will be called the trivial partitions of $J$.

\medskip

For all $n \in \ent /p \ent$ and all $a \in (\ent / p \ent)^r$, set $J_n(a) = \{ i \in J \ | \ a_i = n \}$. Then lemma \ref{primedegreepartition}  can be
reformulated as follows :

\begin{lemma}\label{part} $G(K,K')$ is in bijection with the set $$\{a \in (\ent / p \ent)^r \ | \  \underset{n \in \ent / p \ent} \cup \Omega(J_n(a)) = \Omega_k\}.$$

\end{lemma}

\bigskip

\bigskip
{\bf Proof of Theorem \ref{prime}}

\bigskip

Note first that (a) follows from Proposition \ref{hurlimann}. From now on, we assume that
$n \ge 3$.
Theorem \ref{prime},  as well as a precise condition for when $\sha(L) = 0$ in case {\rm (b)}, is a consequence of Proposition \ref{F} below.

\medskip
For any positive integer $d$, a finite separable extension $F$ of $k$ is said to have {\it local degrees $\le d$} if for all places $v\in\Omega_k$,
the \'etale algebra $F\otimes_k k_v$ is a product of field extensions of  $k_v$ with degrees $\le d$.

\begin{prop}\label{F}  Let $p$ be a prime number, and assume that $L$ is a product of distinct field extensions of degree $p$ of $k$, at least one of which is cyclic.

\medskip
Then
$\sha(L) \not = 0$ $\iff$ the factors of $L$ are distinct subfields of a field extension $F/k$ of degree $p^2$, and all the local degrees of
$F$ are $\le p$.

\medskip
Moreover, if $\sha(L) \not = 0$, and if $L$ is a product of $n$ distinct degree $p$ field extensions of $k$, then $\sha(L) \simeq  (\ent / p \ent)^{n-2}$.

\end{prop}

\medskip
\noindent
{\bf Proof.} Let $K$ be a cyclic factor of $L$, and let us write $L = K \times K'$, where $K'$ is a product of field extensions of degree $p$ of $k$.
Suppose that $\sha(L) \not = 0$. Then there exists a partition $(I_0,I_1)$ of $J$ such that $\Omega(I_0) \cup \Omega(I_1) = \Omega_k$.
Indeed, let $(J_0,\dots,J_{p-1})$ be a non-trivial partition of $J$ such that $\underset{r \in \ent/p\ent} \cup \Omega(J_i) = \Omega_k$. Without
loss of generality, we can assume that $J_0$ is not empty.
Set $I_0 = J_0$, and let $I_1 = \underset  {i \not = 0} \cup J_i$; then we have $\Omega(I_0) = \Omega(J_0)$, and $\Omega (J_r) \subset \Omega(I_1)$
for all $r \not = 0$. Therefore $\Omega(I_0) \cup \Omega(I_1) = \Omega_k$, as claimed. Let $K_i$ and $K_j$ be two distinct factors of $K'$, and
let $K_i K_j$ be the composite of $K_i$ and $K_j$. For all $v \in \Sigma_i$, we have

$$K \otimes _k (K_i K_j)^v \simeq K \otimes _k  K_i^v \otimes_{K_i^v} (K_i K_j)^v ,$$

\medskip
\noindent
and, since $v \in \Sigma_i$,  this is isomorphic to the product of $p$ copies of $(K_iK_j)^v$.

\medskip
Let $i \in I_0$ and $j \in I_1$.
As we have $\Omega(I_0) \cup \Omega(I_1) = \Omega_k$, the tensor product $K \otimes _k (K_i K_j)^v$ is isomorphic
to the product of $p$ copies of $(K_iK_j)^v$ for all $v \in \Omega_k$. This implies that $K$ is a subfield of $K_i K_j$.
Recall that $K$ is cyclic, and that $K_i$, $K_j$ are not isomorphic; hence we have $K \otimes_k K_i \simeq K K_i \subset K_i K_j$. The degree of $K_iK_j$ is at most $p^2$, hence
we have $KK_i = K_iK_j = K K_j$, and $K_i \otimes_k K_j \simeq K_i K_j$ is of degree $p^2$ over $k$.

\medskip Let $i \in I_0$, and set $F = K K_i$; we just saw that $F$ is independent of the choice of $i$, and that $F = K_i K_j$ for all $j \in I_1$.  This
shows that $K_i$ is a subfield of $F$ for all $i \in J$. Since $(I_0,I_1)$ represents a non-trivial element of $\sha(L)$, for all $v \in \Omega_k$ there
exists $i \in J$ such that $F^v \simeq K \otimes_k K_i^v$ is isomorphic to a product of $p$ copies of $K_i^v$. Therefore all the local degrees of $F$ are $\le p$.

\medskip
Conversely, let $F$ be a separable extension of degree  $p^2$ of $k$ such that all the factors of $L$ are distinct subfields of $F$. It suffices to
prove that all non-trivial partitions
$(J_0,\dots,J_{p-1})$ of $J$ satisfy $\underset{r \in \ent/p\ent} \cup \Omega(J_i) = \Omega_k$. Suppose that this is not the case. Let $(J_0,\dots,J_{p-1})$
be a non-trivial partition of $J$ with $\underset{r \in \ent/p\ent} \cup \Omega(J_i) \not = \Omega_k$. Let $v \in \Omega_k$ with
$v \not \in \underset{r \in \ent/p\ent} \cup \Omega(J_i)$. Since
$v \not \in \Omega(J_0)$, there exists $i \not \in J_0$ such that $iv\not \in \Sigma_i$. Let $r \in \ent / p \ent$ such that $i \in J_r$; since $v \not \in \Omega(J_r)$,
there exists $j \not  \in J_r$ such that $v \not \in \Sigma_j$.

\medskip
Since the degree $p$ extensions $K$, $K_i$ and $K_j$ are distinct subfields of $F$, we have $F \simeq K \otimes K_i \simeq K \otimes K_j$.
Note that $[K^v:k_v] = p$, because $v \not \in \Sigma_i$. Let us write $K_i^v$ as a product of separable extensions of $k_v$. If one of the factors $M_s$ of $K_i^v$ is such that $1 < [M_s:k_v] < p$, then $M_s$ and $K^v$ are linearly disjoint, and this contradicts the assumption
that all the local degrees of $F$ are $\le p$. Hence $K_i^v$ is either a degree $p$ field extension of $k_v$, or
a product of $p$ copies of $k_v$. However, if $K_i^v$ and $K^v$ are both fields, then $E_i^v$ is a field extension of degree $p^2$ of $k_v$. Since
$F^v \simeq E_i^v$, this contradicts the hypothesis that all the local degrees of $F$ are $\le p$. Therefore $K_i^v$ is a product of $p$ copies of $k_v$,
and hence $F^v \simeq E_i^v$ is a product of $p$ copies of $K^v$.

\medskip Set $d = [K_iK_j:k]$.
Since $v \not \in \Sigma_j$, the same argument shows that $K_j^v$ is a product of $p$ copies of $k_v$, hence $(K_iK_j)^v$ is a product of $d$ copies of $k_v$.  Note that $(K_iK_j)^v$ is a subalgebra of $F^v$, and that $F^v$ is a product of $p$ copies of $K^v$; hence we have $d \le p$. As $K_i$ and $K_j$ are
distinct subfields of $K_iK_j$, we have $d = r p$ for some integer $r > 1$, and this leads to a contradiction.

\medskip
Hence for all non-trivial partitions $(J_0,\dots,J_{p-1})$ of $J$ we have $\underset{r \in \ent/p\ent} \cup \Omega(J_i) = \Omega_k$. This shows
that $\sha(K,K') = \sha(L) \simeq (\ent / p \ent)^{n-2}$.

\bigskip
\centerline {\bf Proof of Theorem \ref{zero}}

\medskip
Assume now that $L$ is a product of $n$ cyclic extensions, $L = K_1 \times \dots \times K_n$, where $K_i/k$ is a cyclic extension of degree $d_i$, and let
$J = \{1,\dots,n\}$. Note
that $\sha(L) = \sha(K_i,K_i')$ for any $i \in J$, where $L = K_i \times K'_i$. This will be used repeatedly in the sequel.

\medskip
Let $\cP$ be the set of prime numbers dividing $d_1 \dots d_n$. For all $p \in \cP $ and all $i \in J$, let $K_i(p)$ be the largest subfield
of $K_i$ such that $[K_i(p):k]$ is a power of $p$, and set $L(p) = K_1(p) \times \dots \times K_n(p)$.

\begin{prop} \label{prime decomposition} We have
$$\sha (L) =  \underset{p \in \cP(L)}{\oplus}  \sha(L(p)).$$

\end{prop}

\noindent
{\bf Proof.} This follows from Proposition \ref {sha_isom}, and from the fact that $L$ is a product
of cyclic extensions.

\medskip
\begin{lemma}\label{injectivity} Let $p$ be a prime number, and let $K_i/k$, $i \in J$, be cyclic extensions of degree a power of $p$ of $k$.
For all $i \in J$, let $N_i/k$ be a subextension of $K_i/k$. Then
$\sha(\underset{i \in J} \prod N_i)$ injects into $\sha(\underset{i \in J} \prod K_i)$.

\end{lemma}

\medskip
\noindent
{\bf Proof.} This follows from Proposition \ref{sha 0 to sha}, and Remark \ref {injective} following this
proposition.

\bigskip
\noindent
{\bf Proof of Theorem \ref{zero} } Assume that $\sha(L) = 0$. By Lemma \ref{injectivity} and
Proposition \ref{prime decomposition}, the group
$\sha(L_{\rm prim})$ injects into $\sha(L)$, hence this implies that $\sha(L_{\rm prim}) = 0$.
Conversely, suppose that $\sha(L_{\rm prim}) = 0$. By Proposition \ref{prime decomposition}, we
may assume that $L$ is a product of extensions of degree a power of a prime $p$.
Let us write $L = K \times K'$, for some cyclic field extension $K/k$; then $L_{\rm prim} =
K_{\rm prim} \times K'_{\rm prim}$.  Since $\sha(L_{\rm prim}) = 0$, by Proposition \ref{prim prim}
we have $\sha(K,K'_{\rm prim}) = 0$.
Permuting $K$ with one of the other cyclic factors and repeating the same procedure, we obtain
$\sha(L) = 0$.

\bigskip

\begin{example}
Let $p$ be a prime number, and let $F/k$ be an extension with Galois group
$C_p \times C_p$,
where $C_p$ denotes the
cyclic group of order $p$. Let $K_1,\dots,K_{p+1}$ be the distinct subfields of degree $p$ of $F$. Set $L = K_1 \times \dots \times K_{p+1}$. Then
by Proposition \ref{F}, we have $\sha(L) = 0$ or $\sha(L) = (\ent /p \ent)^{p-1}$. Moreover, we have

\medskip
\centerline {$\sha(L) = 0 \iff$ there exists $v \in \Omega_k$ such that $F^v$ is a field.}

\medskip

\noindent
$\bullet$
Assume first that there exists $v \in \Omega_k$ such that $F^v$ is a field. Then $\sha(L) = 0$, hence for all $c \in k^{\times}$, we have $X_c (k) \not
= \emptyset$. In other words, we have $$N_{L/k}(L^{\times}) = k^{\times}$$ in this case.

\medskip

\noindent
$\bullet$ Assume now that all the local degrees of $F$ are $\le p$. Then by Proposition \ref{F} we have $\sha(L) = (\ent /p \ent)^{p-1}$.

\medskip
Let $\Omega_i$ be the set of $v \in \Omega_k$ such that $K_i^v$ is split. Note that we have $\Omega_1 \cup \dots \cup \Omega_{p+1} = \Omega_k$.
This implies that $X_c(k_v) \not = \emptyset$ for all $v \in \Omega_k$ and for all $c \in k{\times}$.

\medskip
Set $K = K_{p+1}$.
For all $c \in k^{\times}$ and for all $v \in \Omega_k$, let us denote by $[K,c]_v \in \ent/p \ent$ the image of  ${\rm inv}(K,c)_v$ by the isomorphism
${1 \over p} \ent /\ent \simeq p \ent / \ent$. Then the map

$$f : k^{\times} / N_{L/k}(L^{\times}) \to (\ent / p \ent)^{p-1}$$ given by
$$c \mapsto (\underset{\Omega_1}\sum [K,c]_v, \dots, \underset{\Omega_{p-1}}\sum [K,c]_v),$$ is an isomorphism.

\bigskip
When $p = 2$, we recover a well-known result of Serre and Tate, see [CF 67], Exercise 5.2, page 360; see also [CT 14], Proposition 5.1.

\end{example}

\bigskip

\bigskip
\bigskip
Eva Bayer--Fluckiger

EPFL-FSB-MATHGEOM-CSAG

Station 8

1015 Lausanne, Switzerland

\medskip

eva.bayer@epfl.ch

\bigskip
Tingyu Lee

EPFL-FSB-MATHGEOM-CSAG

Station 8

1015 Lausanne, Switzerland

\smallskip

and

\smallskip

Technische Universit\"at Dortmund

Fakult\"at f\"ur Mathematik, Lehrstuhl LSVI

Vogelpothsweg 87, 44227 Dortmund, Germany

\medskip
tingyu.lee@gmail.com

\bigskip

Raman Parimala

Department of Mathematics $ \&$ Computer Science

Emory University

Atlanta, GA 30322, USA.

\medskip
parimala@mathcs.emory.edu

\end{document}